\newtheorem{theo}{Theorem}[section]
\newtheorem{prop}{Proposition}[section]
\newtheorem{lem}{Lemma}[section]
\newtheorem{cor}{Corollary}[section]
\newtheorem{DEF}{Definition}[section]
\newtheorem{EX}{Example}[section]
\newtheorem{REM}{Remark}[section]
\newenvironment{theorem}{\begin{theo}}{\end{theo}}  
\newenvironment{proposition}{\begin{prop}}{\end{prop}}
\newenvironment{definition}{\begin{DEF}\rm}{\end{DEF}} 
\newenvironment{example}{\begin{EX}\rm}{\end{EX}} 
\newenvironment{remark}{\begin{REM}\rm}{\end{REM}}
\newcommand{\R}{\mathbb R}
\newcommand{\N}{\mathbb N}
\newcommand{\B}{\mathbb B}
\newcommand{\Hi}{\mathbb H}
\newcommand{\Sph}{\mathbb S}
\newcommand{\W}{\mathbb W}
\newcommand{\X}{\mathbb X}
\newcommand{\Y}{\mathbb Y}
\newcommand{\cl}{{\rm cl\,}}
\newcommand{\bd}{{\rm bd\,}}
\newcommand{\inte}{{\rm int\,}}
\newcommand{\Lagr}{{\rm L}}
\newcommand{\CII}{{\rm C}^{1,1}}
\newcommand{\FDer}{{\rm D}}
\newcommand{\Lin}{\mathcal{L}}
\newcommand{\krn}{{\rm ker\, }}
\newcommand{\nullv}{{\bf 0}}
\newcommand{\aie}{\mbox{\boldmath $\omega$}}
\newcommand{\Vmax}[1]{{#1}\hbox{-}\max}
\newcommand{\commv}[1]{{\bf {#1}}}
\newcommand{\modconv}[1]{\delta_{#1}}
\newcommand{\ball}[2]{{\rm B}({#1};{#2})}
\newcommand{\NCone}[2]{{\rm N}\left({#1};{#2}\right)}
\newcommand{\Image}[2]{\mathcal{I}_{#1}^{{}_{#2}}}
\title{\bf Localizing Vector Optimization Problems \\ with Application to Welfare Economics}
\author{A. Uderzo\footnote{{\sc Department of Mathematics and Applications,
University of Milano-Bicocca}, Via Cozzi, 53 - 20125 Milano, Italy, \hskip3cm
{\it e-mail address}: {\tt amos.uderzo@unimib.it}}}
\begin{document}

\maketitle

\vskip3cm

\begin{abstract}
In the present paper, the Polyak's principle, concerning
convexity of the images of small balls through $\CII$ mappings,
is employed in the study of vector optimization problems. This
leads to extend to such a context achievements of local programming,
an approach to nonlinear optimization, due to B.T. Polyak,
which consists in exploiting the benefits of the convex local
behaviour of certain nonconvex problems.
In doing so, solution existence and optimality conditions are
established for localizations of vector optimization problems,
whose data satisfy proper assumptions. Such results are subsequently
applied in the analysis of welfare economics, in the case of an
exchange economy model with infinite-dimensional commodity
space. In such a setting, the localization of an economy yields
existence of Pareto optimal allocations, which, under certain additional
assumptions, lead to competitive equilibria.
\end{abstract} 

\vskip3cm

\noindent{\bf Mathematics Subject Classification (2010):}
Primary: 58E17; Secondary: 47N10, 90C29, 90C48, 91B15.

\vskip.5cm

\noindent{\bf Key words:} modulus of convexity; Polyak's convexity
principle; openness at a linear rate; Lagrangian function; vector
optimization; $\epsilon$-localization of a problem; exchange
economy; regular feasible allocation; Pareto optimality; competitive
equilibrium.

\vfill


\section{Introduction}

In a series of papers appeared at the beginnings of the current
millennium (see \cite{Poly01a,Poly01b,Poly03}), the term ``local
programming" was used to denote the theory emerging in connection
with a special class of nonlinear optimization problems. This class
includes mathematical programming problems, with equality and inequality
constraints, that, even in the absence of convexity assumptions on
their data, surprisingly do exhibit a local behaviour, which is typical
of convex optimization problems. The doubtless advantages arising
when one handles problems with convex data should underline the
importance of local programming. The unexpected appearance of
a local convex behaviour within the ``ocean of nonlinear optimization"
has a deep reason, resting upon the Polyak's convexity principle.
This crucial achievement of modern nonlinear analysis states,
in its original formulation, that a mapping between Hilbert spaces,
which is $\CII$ around a regular point, carries balls centered at
that point to convex sets, provided that the radius of the balls is
small enough. In many questions related to optimization, it is
already the convexity of images of sets, not only that of the involved
functions, which does the trick, with a lot of proficuous consequences.
Then reason why $\CII$ smoothness of a mapping along with its regularity
should imply convexity of the image of balls is even deeper, having
to do with profound geometric properties of the underlying space
and with the preservation of convexity through linear approximations.

In the present paper, such a ultimate reason is left at that.
Instead, the main theme is the extension of the local programming
approach to vector optimization. In fact, also in such context,
a class of nonlinear problems can be singled out, whose local
convex behaviour bear interesting consequences. The study of
them is carried out in a general setting. Constrained vector
optimization problems will be supposed to be defined in a proper
subclass of reflexive Banach spaces. Nonetheless, some of the
findings that are going to be presented here seem to be novel
even for finite-dimensional problems.

The material exposed in the paper is organized as follows. In
Section \ref{Sect:2}, key concepts and results from nonlinear
analysis, essentially employed in subsequent investigations,
are recalled, along with the most part of the notation
in use throughout the paper. Section \ref{Sect:3}
contains the main result of the paper, describing the effect of
localizing problems in nonlinear vector optimization. It deals,
in particular, with existence of solutions and optimality conditions
for detecting them. Section
\ref{Sect:4} is reserved for an application of the main result
to a topic of mathematical economics, known as welfare theory.
More precisely, a model of (pure) exchange economy, with an
infinite-dimensional commodity space and finitely many consumers,
is considered. In such model, the existence of Pareto optimal allocations,
which, under an adequate qualification, turn out to be also equilibria,
for proper localizations of the original economy is obtained.


\section{Mathematical preliminaries}     \label{Sect:2}

Throughout the paper, whenever $(\X,\|\cdot\|)$ denotes a Banach space,
$\ball{x}{r}$ denotes the ball with centre at $x\in\X$ and radius
$r\ge 0$. The null vector of a Banach space is marked by $\nullv$.
The unit ball, i.e. the set $\ball{\nullv}{1}$, is simply denoted by
$\B$, whereas the unit sphere by $\Sph$. If $S$ is a subset of
a Banach space, $\inte S$, $\bd S$ and $\cl S$ denote the interior,
the boundary and the (topological) closure of $S$, respectively.
Fixed $x\in S$, $\NCone{S}{x}$ denotes the normal cone to $S$
at $x$ in the sense of convex analysis.

For the purposes of the present analysis, general Banach spaces
are a setting too wide. In fact, the main result presented in this
paper and its application essentially rely on certain geometrical
features of a specific class of Banach spaces, features that are
related to the rotundity of the balls. The rotundity property of a
Banach space $(\X,\|\cdot\|)$ can be quantitatively described
by means of the function $\modconv{\X}: [0,2]\longrightarrow [0,1]$,
defined by
$$
    \modconv{\X}(\epsilon)=\inf\left\{1-\left\|\frac{x_1+x_2}{2}\right\|:\ 
    x_1,\, x_2\in\B,\ \|x_1-x_2\|\ge\epsilon\right\},
$$
which is called the {\em modulus of convexity} of $(\X,\|\cdot\|)$. 
Notice that $\modconv{\X}$ is not invariant under equivalent
renormings of $\X$. Such notion allows one to define a special
class of Banach spaces, whose introduction is due to J.A.
Clarkson (see, for instance, \cite{Clar36,FaHaMoPeZi01,Megg98}). 

\begin{definition}
A Banach space  $(\X,\|\cdot\|)$ is called {\em uniformly convex}
(or, {\em uniformly rotund}) if
it is $\modconv{\X}(\epsilon)>0$ for every $\epsilon\in (0,2]$.
\end{definition}

In what follows, the modulus of convexity of a (uniformly convex)
Banach space is said to fulfil the {\em quadratic growth condition}
if there exists $\kappa>0$ such that
$$
    \modconv{\X}(\epsilon)\ge\kappa\epsilon^2,\quad\forall
   \epsilon\in [0,2].
$$
The class of uniformly convex Banach spaces, with modulus of
convexity fulfilling the quadratic growth condition, reveals to be
the proper setting, in which to develp the analysis of the question
under consideration.

\begin{example}
(i) Since the modulus of convexity of a Hilbert space $\Hi$ can be
easily calculated to be
$$
    \modconv{\Hi}(\epsilon)=1-\sqrt{1-\frac{\epsilon^2}{4}},\quad\forall
    \epsilon\in [0,2],
$$
it is clear that every Hilbert space is uniformly convex, with a
modulus of convexity fulfilling the quadratic growth condition
with $0<\kappa\le 1/8$.

(ii) More generally, such Banach spaces as  $l^p$, $L^p$, and $W^p_m$,
with $1<p<2$, are known to have a modulus of convexity satisfying
the relation
$$
    \modconv{l^p}(\epsilon)=\modconv{L^p}(\epsilon)=\modconv{W^p_m}
    (\epsilon)> \frac{p-1}{8}\epsilon^2,\quad\forall\epsilon\in (0,2].
$$
Therefore, they also are an example of uniformly convex space with
a modulus of convexity satisfying the quadratic growth condition
(see, for instance, \cite{FaHaMoPeZi01}).

(iii) Concerning the notion of uniform convexity, a caveat is due:
even finite-dimensional Banach spaces may fail to be uniformly
convex. Consider, for instance, $\R^2$ equipped with the Banach space
structure given by the norm $\|\cdot\|_\infty$.
\end{example}

\begin{remark}     \label{rem:UCspace}
(i) In \cite{Nord60} it was proved that the modulus of convexity
$\modconv{\X}$ of every real Banach space, having dimension greater
than  $1$, admits the following estimate from above
$$
    \modconv{\X}(\epsilon)\le1-\sqrt{1-\frac{\epsilon^2}{4}},\quad\forall
   \epsilon\in [0,2].
$$
This implies that the quadratic growth is a maximal one.

(ii) In the sequel, the fact will be used that every uniformly convex
Banach space is reflexive. In the Banach space theory, such result
is known under the name of Milman-Pettis theorem (see \cite{Megg98}).
It is worth mentioning that uniform convexity is not characterized
by reflexivity. Indeed, in \cite{Day41} a large class of reflexive 
(separable and strictly convex) Banach spaces is exhibited,
which are not isomorphic to uniformly convex spaces.

(iii) Let $(\X,\|\cdot\|)$ be a uniformly convex Banach space, with a modulus
of convexity $\modconv{\X}$ fulfilling the quadratic growth condition,
and $n\in\N$.  As a consequence of Theorem 5.2.25 in \cite{Megg98},
the space $(\X^n,\|\cdot\|_2)$, where the direct sum space
$\X^n=\X\oplus\dots\oplus\X$ is normed with the $2$-norm
$\|\cdot\|_2:\X^n\longrightarrow [0,+\infty)$
$$
    \|(x_1,\dots,x_n)\|_2=\left(\sum_{i=1}^n\|x_i\|^2\right)^{1/2},
$$
is a uniformly convex space.
\end{remark}


Another key concept, playing a crucial role in this paper, is openness
at a linear rate, a property for mappings, which postulates a certain
quantitative surjectivity behaviour. More precisely, 
a mapping $f:\X\longrightarrow\Y$ between Banach
spaces is said to be {\em open at a linear rate} around $(x_0,f(x_0))$,
with $x_0\in\X$, if there exist positive $\delta$, $\zeta$ and
$\sigma$ such that
\begin{equation}      \label{in:loclinopen}
    f(\ball{x}{r})\supseteq\ball{f(x)}{\sigma r}\cap\ball{f(x_0)}{\zeta},
      \quad\forall x\in\ball{x_0}{\delta},\ \forall r\in [0,\delta).
\end{equation}
Clearly, inclusion $(\ref{in:loclinopen})$ has crucial consequences
on the local solvability of the equation $f(x)=y$ as well as on the
Lipschitz behaviour of its solution set $f^{-1}(y)$ near $x_0$. So, it comes
not surprising that many efforts have been directed to find out
criteria able to detect the occurence of such a property.
The following result, known in nonlinear analsysis
as Lyusternik-Graves theorem,
provides a characterization of openness at a linear rate
for strictly differentiable mappings (see, for instance,
Theorem 1.57 in \cite{Mord06I}). Throughout the paper, the Fr\'echet
derivative at $x\in\X$ of a mapping $f:\X\longrightarrow\Y$ between
Banach spaces is denoted by $\FDer f(x)$.

\begin{theorem}     \label{thm:LyuGra}
Let $f:\X\longrightarrow\Y$ be a mapping between Banach spaces.
Suppose $f$ to be strictly differentiable at $x_0\in\X$. Then
$f$ is open at a linear rate around $(x_0,f(x_0))$ iff
$\FDer f(x_0)$ is onto.
\end{theorem}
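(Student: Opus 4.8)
The plan is to establish the two implications separately, treating the sufficiency (surjectivity of $\FDer f(x_0)$ forces linear openness) as the substantial direction and the necessity as a rescaling argument combined with a classical completeness lemma. Write $A = \FDer f(x_0)$ throughout.

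\emph{Sufficiency.} Since $A$ is a bounded linear operator mapping a Banach space onto a Banach space, the open mapping theorem yields a constant $c > 0$ with $A\B \supseteq c\B$; equivalently, every $y \in \Y$ has a preimage $u$ under $A$ satisfying $\|u\| \le \|y\|/c$. The heart of the matter is then a Lyusternik-type successive approximation. I would fix $\varepsilon \in (0,c)$ and invoke strict differentiability to choose $\delta_0 > 0$ so that $\|f(u) - f(v) - A(u-v)\| \le \varepsilon\|u-v\|$ for all $u,v \in \ball{x_0}{\delta_0}$. Given $x$ near $x_0$ and a target $y$ near $f(x)$, set $x_0' := x$ and iterate $x_{n+1}' := x_n' + h_n$, where $h_n$ solves $Ah_n = y - f(x_n')$ with $\|h_n\| \le \|y - f(x_n')\|/c$. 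Because $Ah_n$ exactly cancels the current residual, the new residual equals $-(f(x_{n+1}') - f(x_n') - A h_n)$, which the differentiability estimate bounds by $\varepsilon\|h_n\| \le (\varepsilon/c)\|y - f(x_n')\|$. Setting $q := \varepsilon/c < 1$, the residuals contract geometrically.

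This geometric decay makes $\sum_n \|h_n\| \le \|y - f(x)\|/(c(1-q))$ finite, so $(x_n')$ is Cauchy and, by completeness of $\X$, converges to some $\xi$ with $f(\xi) = y$. Choosing $\sigma := c(1-q)$ gives $\|\xi - x\| \le \sum_n\|h_n\| \le \sigma^{-1}\|y - f(x)\| \le r$ whenever $\|y - f(x)\| \le \sigma r$; that is, $\xi \in \ball{x}{r}$, which is precisely the inclusion $(\ref{in:loclinopen})$ (the intersection with $\ball{f(x_0)}{\zeta}$ only shrinks the right-hand side, so $\zeta$ may be taken arbitrarily). The delicate bookkeeping, and the point I expect to be the main obstacle, is guaranteeing that every iterate remains inside $\ball{x_0}{\delta_0}$, where the differentiability estimate is licensed: since $\|x_n' - x_0\| \le \|x - x_0\| + \sum_k\|h_k\|$, it suffices to require $\delta \le \delta_0/2$ and $r < \delta$, so that the total displacement never exceeds $2\delta \le \delta_0$.

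\emph{Necessity.} Assuming linear openness, specialize $(\ref{in:loclinopen})$ to $x = x_0$: for all sufficiently small $r$ one has $f(\ball{x_0}{r}) \supseteq \ball{f(x_0)}{\sigma r}$. Rescaling through $g_r(u) := r^{-1}(f(x_0 + ru) - f(x_0))$ gives $g_r(\B) \supseteq \sigma\B$, while Fr\'echet differentiability yields $g_r \to A$ uniformly on $\B$ as $r \downarrow 0$. Consequently each $\sigma v$ with $v \in \B$ is a limit of points of $A\B$, so $\sigma\B \subseteq \cl(A\B)$. The range of $A$ is thereby dense near the origin; the classical completeness argument underlying the open mapping theorem then upgrades this covering of the closure to a genuine covering $A\B \supseteq \sigma'\B$ for every $\sigma' < \sigma$, so that $A$ is onto. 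This last upgrade from density to surjectivity is where the completeness of $\X$ is used a second time, and is the only subtle point of the converse direction.
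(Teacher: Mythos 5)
Your proof is correct, but note that the paper itself does not prove this statement at all: it quotes it as the classical Lyusternik--Graves theorem and refers the reader to Theorem 1.57 in Mordukhovich's book. What you have supplied is the standard self-contained argument for that cited result, and both directions hold up. In the sufficiency direction, your Newton-type successive approximation (solve $Ah_n = y - f(x_n')$ with a preimage bound from the open mapping theorem, then contract the residuals using the strict-differentiability estimate with $q = \varepsilon/c < 1$) is exactly the Lyusternik iteration; the point you correctly flag as delicate --- keeping all iterates inside $\ball{x_0}{\delta_0}$ --- does close, via the induction in which the partial sums $\sum_{k\le n}\|h_k\| \le r < \delta$ are controlled using only the contraction estimates already licensed at earlier steps, so that $\|x_{n+1}'-x_0\| \le \delta + r < 2\delta \le \delta_0$. (Two harmless glosses: the open mapping theorem gives the preimage bound $\|u\|\le\|y\|/c'$ for any $c'<c$ rather than for $c$ itself, which only perturbs constants; and the final identification $f(\xi)=y$ uses that strict differentiability makes $f$ locally Lipschitz, hence continuous, on $\ball{x_0}{\delta_0}$.) In the necessity direction, your rescaling $g_r(u) = r^{-1}(f(x_0+ru)-f(x_0))$, the uniform convergence $g_r \to \FDer f(x_0)$ on $\B$, the resulting inclusion $\sigma\B \subseteq \cl\left(\FDer f(x_0)\B\right)$, and the completeness upgrade from dense covering to genuine covering are all sound; this is precisely how the converse half of Lyusternik--Graves is usually proved, and you are right that completeness of $\X$ enters a second time there. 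So the proposal is a complete and essentially textbook proof of a result the paper treats as a black box.
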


For the analysis conducted in the present paper, strict differentiability
will be not enough. Instead, in the main result, mappings will be
supposed to be $\CII$. Recall that a mapping $f:\X\longrightarrow\Y$
between Banach spaces is said to be $\CII(\Omega)$, with $\Omega$
being an open subset of $\X$, if it admits Fr\'echet derivative at $x$,
for every $x\in\Omega$, and the mapping $\FDer f:\Omega
\longrightarrow\Lin(\X,\Y)$ is Lipschitz continuous on $\Omega$,
where $\Lin(\X,\Y)$ stands for the Banach space of all linear bounded
mappings between $\X$ and $\Y$, equipped with the operator norm.
In the special case $\Y=\R$, the symbol $\Lin(\X,\Y)$ is replaced
by $\X^*$. Given $x^*\in\X^*$, its kernel is denoted by $\krn x^*$.

The main tool of analysis in the subsequent section will be the
Polyak's convexity principle. It states that $\CII$ mappings, which
are open at a linear rate around a given point, carry small balls
centered at that point to convex sets. This important result was
originally established for mappings between Hilbert spaces (see
\cite{Poly01a,Poly01b,Poly03}) and, later on, it was extended to mappings
defined on uniformly convex Banach spaces, with modulus of
convexity fulfilling the quadratic growth condition (see \cite{Uder13}).
In order to give the present analysis a proper level of generality,
motivated by applications to models of welfare economics exposed
in the last section, the Polyak's convexity principle is below
formulated in its most recent version.

\begin{theorem}    \label{thm:Polyakteo}
Let $f:\X\longrightarrow\Y$ be a mapping between Banach spaces, let $\Omega$
be an open subset of $\X$, let $x_0\in\Omega$, and $r>0$ such that $\ball{x_0}{r}
\subseteq\Omega$. Suppose that:

\noindent $(i)$ $(\X,\|\cdot\|)$ is uniformly convex with modulus $\delta_\X$
satisfying the quadratic growth condition;

\noindent $(ii)$ $f\in\CII(\Omega)$ and $\FDer f(x_0)\in\Lin(\X,\Y)$
is onto.

\noindent Then, there exists $\epsilon_0\in (0,r)$ such that
$f(\ball{x_0}{\epsilon})$ is convex, for every $\epsilon\in
[0,\epsilon_0]$.
\end{theorem}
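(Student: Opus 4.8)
The plan is to reduce the assertion to a two-point convexity check and then to pit a second-order perturbation estimate (coming from $\CII$ smoothness and openness at a linear rate) against the strict convexity gain of the ball (coming from uniform convexity with quadratic growth). First I would fix $\epsilon\in(0,\epsilon_0]$, pick $y_1,y_2\in f(\ball{x_0}{\epsilon})$ with preimages $x_1,x_2\in\ball{x_0}{\epsilon}$, fix $t\in[0,1]$, and set $y_t=(1-t)y_1+ty_2$. It suffices to produce $x_t\in\ball{x_0}{\epsilon}$ with $f(x_t)=y_t$. The natural point to start from is the convex combination $\bar x=(1-t)x_1+tx_2$, which is what the linear part of $f$ would carry onto $y_t$.

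Next I would quantify how far $f(\bar x)$ is from $y_t$. Since $f\in\CII(\Omega)$, its derivative is Lipschitz on $\Omega$, say with constant $L$, giving the second-order residual bound $\|f(x)-f(x')-\FDer f(x')(x-x')\|\le(L/2)\|x-x'\|^2$ along segments inside the ball. Applying this at $x'=\bar x$ to both $x_1$ and $x_2$ and taking the combination $(1-t)(\cdot)+t(\cdot)$, the first-order terms cancel because $(1-t)(x_1-\bar x)+t(x_2-\bar x)=\nullv$, and a short computation using $\|x_1-\bar x\|=t\|x_1-x_2\|$, $\|x_2-\bar x\|=(1-t)\|x_1-x_2\|$ yields $\|y_t-f(\bar x)\|\le(L/2)\,t(1-t)\,\|x_1-x_2\|^2$. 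Because $\CII$ implies strict differentiability and $\FDer f(x_0)$ is onto, Theorem \ref{thm:LyuGra} makes $f$ open at a linear rate around $(x_0,f(x_0))$; applying inclusion $(\ref{in:loclinopen})$ at the base point $\bar x$ (which lies in $\ball{x_0}{\delta}$ once $\epsilon_0\le\delta$) produces a solution $x_t$ of $f(x_t)=y_t$ with $\|x_t-\bar x\|\le\sigma^{-1}\|y_t-f(\bar x)\|\le(L/2\sigma)\,t(1-t)\,\|x_1-x_2\|^2$, valid once $\epsilon_0$ is small enough that both $\|y_t-f(\bar x)\|$ and the resulting radius fall below the thresholds $\zeta$, $\delta$ appearing in $(\ref{in:loclinopen})$.

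It remains to control $\|x_t-x_0\|\le\|x_t-\bar x\|+\|\bar x-x_0\|$, and this is where uniform convexity pays off. Writing $u_i=(x_i-x_0)/\epsilon\in\B$, the definition of $\modconv{\X}$ together with convexity of $t\mapsto\|(1-t)u_1+tu_2\|$ gives $\|(1-t)u_1+tu_2\|\le 1-2\min\{t,1-t\}\,\modconv{\X}(\|u_1-u_2\|)$; hence, after rescaling and invoking the quadratic growth $\modconv{\X}(\eta)\ge\kappa\eta^2$, one obtains $\|\bar x-x_0\|\le\epsilon-(2\kappa/\epsilon)\min\{t,1-t\}\,\|x_1-x_2\|^2$. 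Since $t(1-t)\le\min\{t,1-t\}$, combining the two estimates gives $\|x_t-x_0\|\le\epsilon+\min\{t,1-t\}\,\|x_1-x_2\|^2\,\bigl(L/2\sigma-2\kappa/\epsilon\bigr)$, so choosing $\epsilon_0\le\min\{\delta,\,4\kappa\sigma/L\}$ (and small enough to meet the two threshold requirements above) forces the bracket to be nonpositive and yields $\|x_t-x_0\|\le\epsilon$, i.e. $x_t\in\ball{x_0}{\epsilon}$. This proves $y_t\in f(\ball{x_0}{\epsilon})$ and hence convexity of the image.

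The main obstacle I anticipate is the bookkeeping that makes a single $\epsilon_0$ serve every requirement simultaneously and uniformly in $t$: the openness inclusion $(\ref{in:loclinopen})$ must be applicable at all base points $\bar x$ as $x_1,x_2$ and $t$ vary, the segments used for the $\CII$ residual must stay in $\Omega$, and one must establish the sharp comparison $t(1-t)\le\min\{t,1-t\}$ and the norm-convexity argument giving the $\min\{t,1-t\}$ factor (rather than only the midpoint estimate), so that the quadratic gain $2\kappa/\epsilon$ genuinely dominates the quadratic loss $L/2\sigma$ for small $\epsilon$. The decisive structural point is precisely the matching of quadratic orders: the residual and the correction are both $O(\|x_1-x_2\|^2)$, while the quadratic growth of the modulus produces a competing $O(\|x_1-x_2\|^2)$ term carrying a factor $1/\epsilon$ that can be made to win by shrinking the radius.
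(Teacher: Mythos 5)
Theorem \ref{thm:Polyakteo} is not actually proved in the paper: it is recalled as a known result, with the Hilbert-space case taken from \cite{Poly01a,Poly01b,Poly03} and the uniformly convex case from \cite{Uder13}. So your attempt can only be compared against that standard argument, and measured against it your proof is correct and follows the same basic scheme: a quadratic linearization residual is pitted against a quadratic rotundity gain, with the Lyusternik--Graves openness supplying the correction. The key estimates check out. The cancellation of first-order terms indeed gives $\|y_t-f(\bar x)\|\le (L/2)\,t(1-t)\,\|x_1-x_2\|^2$; the three-point argument (writing $(1-t)u_1+tu_2$ as a convex combination of $u_1$ and $(u_1+u_2)/2$ when $t\le 1/2$) combined with the quadratic growth of $\modconv{\X}$ gives $\|\bar x-x_0\|\le\epsilon-(2\kappa/\epsilon)\min\{t,1-t\}\,\|x_1-x_2\|^2$; and since $t(1-t)\le\min\{t,1-t\}$, any $\epsilon\le 4\kappa\sigma/L$ closes the loop. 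Two small pieces of bookkeeping should be stated more carefully. First, in $(\ref{in:loclinopen})$ the cut-off ball is centered at $f(x_0)$, so the condition you must verify is $\|y_t-f(x_0)\|\le\zeta$, not $\|y_t-f(\bar x)\|\le\zeta$ as written; this is harmless, because $\|y_t-f(x_0)\|\le\max\{\|f(x_1)-f(x_0)\|,\|f(x_2)-f(x_0)\|\}$ and $f$ is locally Lipschitz (being $\CII$), so one further smallness requirement on $\epsilon_0$, uniform in $x_1$, $x_2$, $t$, suffices. Second, the radius $r=\sigma^{-1}\|y_t-f(\bar x)\|$ must lie below $\delta$, which follows from the uniform bound $\|y_t-f(\bar x)\|\le (L/2)\epsilon_0^2$. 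Finally, a genuine merit of your variant relative to the classical midpoint formulation of Polyak's argument: by running the estimate for every $t\in[0,1]$ with the factor $\min\{t,1-t\}$, rather than only at midpoints, you obtain convexity of $f(\ball{x_0}{\epsilon})$ directly, thereby avoiding the supplementary step of proving that the image is closed before upgrading midpoint convexity to full convexity.
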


\begin{remark}     \label{rem:bdPolyprin}
The following complement of Theorem \ref{thm:Polyakteo},
already remarked in \cite{Poly01a}, will be exploited in the sequel. 
From hypothesis (ii) and Theorem \ref{thm:LyuGra}, one has that
$f(\inte \ball{x_0}{\epsilon}) \subseteq\inte f(\ball{x_0}{\epsilon})
\ne\varnothing$, for every $\epsilon\in(0,\epsilon_0]$. Therefore,
it holds
$$
    f^{-1}(\bd f(\ball{x_0}{\epsilon}))\subseteq
    \bd\ball{x_0}{\epsilon}.
$$
\end{remark}


\section{A localization property in vector optimization}   \label{Sect:3}

Consider a vector optimization problem of the following form:
$$
   \Vmax{K}_{x\in\X} h(x) \quad\hbox{subject to}\quad g(x)\in C,
   \leqno (\mathcal{VOP})
$$
where $h:\X\longrightarrow\W$ and $g:\X\longrightarrow\Y$ are
given mappings, $C$ is a nonempty subset of $\Y$ and $K\subseteq\W$
is a closed, convex and pointed cone (with apex at the null vector of $\W$).
Here $(\X,\|\cdot\|)$, $(\W,\|\cdot\|)$ and $(\Y,\|\cdot\|)$ are real
Banach spaces. Besides, $\W$ is supposed to be partially ordered by $K$
in the canonical way, namely $K$ induces a partial order relation 
$\le_K$ over elements of $\W$ as follows
$$
    w_1\le_K w_2 \qquad\hbox{iff}\qquad w_2-w_1\in K.
$$
In other terms, $K$ can be regarded as the positive cone with
respect to a partial ordering $\le_K$ defined on $\W$. By $K^\oplus
=\{w^*\in\W^*:\ \langle w^*,w\rangle\ge 0,\quad\forall w\in K\}$ the
positive dual cone of $K$ is denoted.
The feasible region associated with $(\mathcal{VOP})$ is indicated by
$$
   R=\{x\in\X:\ g(x)\in C\}=g^{-1}(C).
$$
Recall that $\bar x\in R$ is said to be  {\em locally $K$-optimal} for (or a
{\em local solution} to) $(\mathcal{VOP})$ if there exists $r>0$ such that
$$
    h(R\cap\ball{\bar x}{r})\cap(h(\bar x)+K)=\{h(\bar x)\}.
$$
Of course, if in the above equality $\ball{\bar x}{r}$ can be
replaced by $\X$, $\bar x$ is also {\em globally $K$-optimal}
for $(\mathcal{VOP})$.

According to a longstanding approach, in order to investigate
optimization problems of the form $(\mathcal{VOP})$, given an
element $\bar x\in R$ it is convenient to associate with such problem
the mapping $\Image{\bar x}{\mathcal{VOP}}
:\X\longrightarrow\W\times\Y$, defined by
\begin{eqnarray}    \label{eq:defimage}
    \Image{\bar x}{\mathcal{VOP}}(x)=(h(x)-h(\bar x),g(x)).
\end{eqnarray}
By means of such mapping, letting
$$
    \mathcal{Q}=(K\backslash\{\nullv\})\times C,
$$
one is in a position to formulate the following set characterization
of local $K$-optimality.

\begin{proposition}    \label{pro:Koptchar}
An element $\bar x\in R$ is a local solution to $(\mathcal{VOP})$
iff there exists $r>0$ such that
$$
   \Image{\bar x}{\mathcal{VOP}}(\ball{\bar x}{r})\cap\mathcal{Q}
   =\varnothing.
$$
\end{proposition}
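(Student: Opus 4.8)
The plan is to show that, for each fixed $r>0$, the defining equality of local $K$-optimality at $\bar x$ (with that $r$) and the emptiness of $\Image{\bar x}{\mathcal{VOP}}(\ball{\bar x}{r})\cap\mathcal{Q}$ are logically equivalent. Since both ``$\bar x$ is a local solution'' and the asserted condition are of the form ``there exists $r>0$ such that $\dots$'', the equivalence for a common $r$ immediately yields the two-sided implication. The argument is thus a careful unwinding of the definitions, with no analytic ingredient.

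First I would rewrite membership in the image intersection in terms of the data. A point belongs to $\Image{\bar x}{\mathcal{VOP}}(\ball{\bar x}{r})\cap\mathcal{Q}$ iff it equals $(h(x)-h(\bar x),g(x))$ for some $x\in\ball{\bar x}{r}$ with $(h(x)-h(\bar x),g(x))\in(K\backslash\{\nullv\})\times C$, that is, with $h(x)-h(\bar x)\in K\backslash\{\nullv\}$ and $g(x)\in C$. The latter says precisely $x\in R$, so $\Image{\bar x}{\mathcal{VOP}}(\ball{\bar x}{r})\cap\mathcal{Q}\ne\varnothing$ holds iff there is a feasible $x\in\ball{\bar x}{r}$ with $h(x)-h(\bar x)\in K$ and $h(x)\ne h(\bar x)$.

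Next I would analyse the optimality equality. Since $\bar x\in R\cap\ball{\bar x}{r}$ and $\nullv\in K$, the point $h(\bar x)$ always lies in $h(R\cap\ball{\bar x}{r})\cap(h(\bar x)+K)$; hence the equality with $\{h(\bar x)\}$ fails exactly when this set contains some $w\ne h(\bar x)$. Any such $w$ has the form $w=h(x)$ for a feasible $x\in\ball{\bar x}{r}$ with $h(x)-h(\bar x)\in K$ (i.e. $w\in h(\bar x)+K$) and $h(x)\ne h(\bar x)$ --- equivalently $h(x)-h(\bar x)\in K\backslash\{\nullv\}$. This reproduces verbatim the condition found above for nonemptiness of the image intersection, so for every $r>0$ the optimality equality holds iff $\Image{\bar x}{\mathcal{VOP}}(\ball{\bar x}{r})\cap\mathcal{Q}=\varnothing$.

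The only point requiring care --- and it is the closest thing to an obstacle in this otherwise purely definitional proof --- is the bookkeeping of the two-part condition $h(x)-h(\bar x)\in K\backslash\{\nullv\}$: one must keep the membership $h(x)-h(\bar x)\in K$, which encodes $h(x)\in h(\bar x)+K$, separate from the non-triviality $h(x)\ne h(\bar x)$, which encodes $w\ne h(\bar x)$, and check that the two together are exactly what witnesses the failure of the optimality equality. I expect that closedness, convexity and pointedness of $K$ are not used here; they will matter only for the deeper existence and optimality results that follow.
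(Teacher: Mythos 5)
Your proof is correct and is exactly what the paper does: the paper's own proof is a one-line remark that the claim ``stems directly from the definition of $\Image{\bar x}{\mathcal{VOP}}$ and the notion of local $K$-optimality,'' and your argument is precisely that definitional unwinding, carried out for a common radius $r$ (including the correct observation that $h(\bar x)$ always lies in $h(R\cap\ball{\bar x}{r})\cap(h(\bar x)+K)$ since $\nullv\in K$, so failure of optimality is witnessed by a point distinct from $h(\bar x)$). No gap; nothing further is needed.
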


\begin{proof}
The proof stems directly from the definition of $\Image{\bar x}
{\mathcal{VOP}}$ and from the aforementioned notion of local
$K$-optimality. 
\end{proof}

Within the context of vector optimization,
the issue addressed in this section deals with the local behaviour
of $(\mathcal{VOP})$ near certain reference points of its feasible
region. The approach here proposed leads to introduce the concept
of problem localization.
Let $x_0\in R$ and $\epsilon>0$. By {\em $\epsilon$-localization}
of $(\mathcal{VOP})$ around $x_0$ the following problem
is meant
$$
   \Vmax{K}_{x\in\ball{x_0}{\epsilon}} h(x) \quad\hbox{subject to}
    \quad g(x)\in C\leqno (\mathcal{VOP}_{x_0,\epsilon})
$$
The reader should notice that,  because $\ball{x_0}{\epsilon}$ is closed,
$(\mathcal{VOP}_{x_0,\epsilon})$ actually contains a further constraint.
Its introduction may change substantially the geometry of the problem.

In order to investigate the effect of localizing vector optimization,
the next general proposition is needed, which
shows how openness at a linear rate of
$\Image{x_0}{\mathcal{VOP}}$ can not be consistent with the
$K$-optimality of a feasible element $x_0\in R$.

\begin{proposition}      \label{pro:openvectopt}
With reference to a problem  $(\mathcal{VOP})$, let $x_0\in R$.
If mapping $\Image{x_0}{\mathcal{VOP}}$ is open at a linear
rate near $(x_0,(\nullv,g(x_0)))$, then $x_0$ fails to be a solution to
$(\mathcal{VOP}_{x_0,\epsilon})$, for every $\epsilon>0$.
\end{proposition}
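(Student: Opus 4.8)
The plan is to recast the claim into the set-intersection language of Proposition \ref{pro:Koptchar} and then exploit openness quantitatively. First I would observe that, by the very argument of Proposition \ref{pro:Koptchar} applied to the localized feasible region $R\cap\ball{x_0}{\epsilon}$, the point $x_0$ is a solution to $(\mathcal{VOP}_{x_0,\epsilon})$ if and only if
$$
   \Image{x_0}{\mathcal{VOP}}(\ball{x_0}{\epsilon})\cap\mathcal{Q}=\varnothing .
$$
Indeed, $\Image{x_0}{\mathcal{VOP}}(x)=(h(x)-h(x_0),g(x))$ belongs to $\mathcal{Q}=(K\backslash\{\nullv\})\times C$ exactly when $x\in R$ and $h(x)\in(h(x_0)+K)\setminus\{h(x_0)\}$, i.e.\ exactly when $x$ witnesses the failure of $K$-optimality of $x_0$ over $\ball{x_0}{\epsilon}$. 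Hence it suffices to prove that, under the openness hypothesis, $\Image{x_0}{\mathcal{VOP}}(\ball{x_0}{\epsilon})\cap\mathcal{Q}\ne\varnothing$ for every $\epsilon>0$.

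Next I would feed the centre point itself into the openness inclusion $(\ref{in:loclinopen})$. Writing $F=\Image{x_0}{\mathcal{VOP}}$, note that $F(x_0)=(\nullv,g(x_0))$, so openness of $F$ around $(x_0,(\nullv,g(x_0)))$ furnishes positive $\delta,\zeta,\sigma$ such that, choosing $x=x_0\in\ball{x_0}{\delta}$,
$$
   F(\ball{x_0}{r})\supseteq\ball{F(x_0)}{\sigma r}\cap\ball{F(x_0)}{\zeta}
   =\ball{(\nullv,g(x_0))}{\min\{\sigma r,\zeta\}},\qquad r\in[0,\delta).
$$
Thus the image of an arbitrarily small ball about $x_0$ engulfs a full ball about $(\nullv,g(x_0))$ in $\W\times\Y$.

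The crux is then to plant a point of $\mathcal{Q}$ inside that ball. Since $K$ is a nontrivial cone, fix $\bar w\in K$ with $\bar w\ne\nullv$; by positive homogeneity $t\bar w\in K\backslash\{\nullv\}$ for every $t>0$, and since $g(x_0)\in C$ the point $(t\bar w,g(x_0))$ lies in $\mathcal{Q}$. Its distance from $F(x_0)$ is $\|(t\bar w,\nullv)\|_{\W\times\Y}=t\,\|(\bar w,\nullv)\|_{\W\times\Y}\to 0$ as $t\downarrow 0$, so for $t$ small enough $(t\bar w,g(x_0))$ falls into $\ball{(\nullv,g(x_0))}{\min\{\sigma r,\zeta\}}$, hence into $F(\ball{x_0}{r})$. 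Therefore some $x\in\ball{x_0}{r}$ satisfies $F(x)=(t\bar w,g(x_0))\in\mathcal{Q}$. Given any $\epsilon>0$, picking $r\le\min\{\epsilon,\delta/2\}$ gives $\ball{x_0}{r}\subseteq\ball{x_0}{\epsilon}$ and so $F(\ball{x_0}{\epsilon})\cap\mathcal{Q}\ne\varnothing$, which by the first step denies $x_0$ being a solution to $(\mathcal{VOP}_{x_0,\epsilon})$.

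I expect the only real subtlety, rather than a genuine obstacle, to be the bookkeeping in the last step: one must coordinate the radii so that the fabricated point $(t\bar w,g(x_0))$ simultaneously sits in $\mathcal{Q}$ (secured by the scaling invariance of $K$), stays within $\min\{\sigma r,\zeta\}$ of $F(x_0)$ (secured by shrinking $t$), and lives in some $\ball{x_0}{r}$ with $r\le\epsilon$ (secured by shrinking $r$). Implicit throughout is that $K\ne\{\nullv\}$, for otherwise $\mathcal{Q}=\varnothing$ and the statement would be vacuous; this nontriviality is the standing assumption of vector optimization. It is worth stressing that the argument invokes neither uniform convexity nor $\CII$ regularity: plain openness at a linear rate of $\Image{x_0}{\mathcal{VOP}}$ already forces the conclusion.
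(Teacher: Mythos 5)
Your proof is correct and follows essentially the same route as the paper's: apply the openness inclusion $(\ref{in:loclinopen})$ at $x=x_0$ with a sufficiently small radius $r$, observe that the resulting ball around $(\nullv,g(x_0))$ meets $\mathcal{Q}$, and conclude via the set characterization of Proposition \ref{pro:Koptchar}. The only difference is that you make explicit what the paper asserts without comment --- namely that points $(t\bar w, g(x_0))$ with $\bar w\in K\backslash\{\nullv\}$ and $t>0$ small witness $\ball{(\nullv,g(x_0))}{\sigma r}\cap\mathcal{Q}\ne\varnothing$, which requires the standing nontriviality $K\ne\{\nullv\}$.
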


\begin{proof}
Fix an arbitrary $\epsilon>0$. By hypothesis, according to $(\ref{in:loclinopen})$
there exist positive $\delta$, $\zeta$ and $\sigma$ such that
\begin{equation}
    \Image{x_0}{\mathcal{VOP}}(\ball{x}{r})\supseteq
   \ball{\Image{x_0}{\mathcal{VOP}}(x)}{\sigma r}\cap
   \ball{(\nullv,g(x_0))}{\zeta},  \quad\forall 
  x\in\ball{x_0}{\delta},\ \forall r\in [0,\delta).
\end{equation}
Thus, by taking $x=x_0$ and $r$ such that
$$
    0<r<\min\left\{\epsilon,\frac{\zeta}{\sigma}\right\},
$$
one finds
$$
   \Image{x_0}{\mathcal{VOP}}(\ball{x_0}{r})\supseteq
   \ball{(\nullv,g(x_0))}{\sigma r}.
$$
Since $r<\epsilon$ and $\ball{(\nullv,g(x_0))}{\sigma r}\cap
\mathcal{Q}\ne\varnothing$, the last inclusion entails
$$
    \Image{x_0}{\mathcal{VOP}}(\ball{x_0}{r})\cap\mathcal{Q}
    \ne\varnothing,
$$
what excludes that $x_0$ is $K$-optimal for 
$(\mathcal{VOP}_{x_0,\epsilon})$, according to Proposition
\ref{pro:Koptchar}.
\end{proof}

Now, the analysis is focussed on the subclass of those elements
$x_0\in R$, such that $\Image{x_0}{\mathcal{VOP}}$ is open at
a linear rate near $(x_0,(\nullv,g(x_0)))$. If assuming the data
$h$ and $g$ to be at least strictly differentiable at $x_0$, by
virtue of Theorem \ref{thm:LyuGra} the surjectivity condition on $\FDer
(h,g)(x_0)$ singles out points at which $\Image{x_0}{\mathcal{VOP}}$
is open at a linear rate. These points, while being not solution
to $(\mathcal{VOP})$, nevertheless turn out to enjoy an interesting
property. Indeed, whenever the Polyak's convexity principle can
be invoked, the $\epsilon$-localization of $(\mathcal{VOP})$ around them,
for $\epsilon$ sufficently small, reveal to do admit a solution, which
can be detected by a method proper of convex optimization, i.e.
via an optimality condition stating the maximality of such solution
for the Lagrangian function. Here, by Lagrangian function associated
with $(\mathcal{VOP})$, the classical function $\Lagr:\W^*\times\Y^*
\times\X\longrightarrow\R$, defined by
$$
    \Lagr(w^*,y^*;x)=\langle w^*,h(x)\rangle+\langle y^*,g(x)\rangle,
$$
is meant. This localization property is fomulated in the next result.

\begin{theorem}        \label{thm:locvecopt}
With reference to $(\mathcal{VOP})$, let $\Omega\subseteq\X$ open,
$C\subseteq\Y$ a nonempty, closed and convex set, and $x_0\in\Omega\cap R$.
Suppose that:

\noindent $(i)$ $(\X,\|\cdot\|)$ is uniformly convex with modulus $\delta_\X$
satisfying the quadratic growth condition;

\noindent $(ii)$ $(\W,\|\cdot\|)$ and $(\Y,\|\cdot\|)$ are reflexive Banach spaces;

\noindent $(iii)$ $h,\, g\in\CII(\Omega)$ and $\FDer (h,g)(x_0)\in\Lin(\X,\W\times\Y)$
is onto.

\noindent Then, there exists $\epsilon_0>0$ such that for every $\epsilon\in
(0,\epsilon_0]$ there are $x_\epsilon\in\bd\ball{x_0}{\epsilon}$ and
$(w^*_\epsilon,y^*_\epsilon)\in(\W^*\times\Y^*)\backslash\{(\nullv^*,
\nullv^*)\}$ such that

\begin{eqnarray}
   x_\epsilon \hbox{ is a global solution to } (\mathcal{VOP}_{x_0,\epsilon}); 
\end{eqnarray}
\begin{eqnarray}     \label{in:PPvecopt}
   w^*_\epsilon\in K^\oplus\backslash\{\nullv^*\},\qquad
   -y^*_\epsilon\in\NCone{g(x_\epsilon)}{C};
\end{eqnarray}
\begin{eqnarray}     \label{in:PPvecopt2}
   \Lagr(w^*_\epsilon,y^*_\epsilon;x_\epsilon)\ge
   \Lagr(w^*_\epsilon,y^*_\epsilon;x),\quad\forall
   x\in\ball{x_0}{\epsilon}.
\end{eqnarray}
\end{theorem}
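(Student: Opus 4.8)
The plan is to combine the Polyak convexity principle (Theorem \ref{thm:Polyakteo}) with a separation argument on the image set. First I would apply Theorem \ref{thm:Polyakteo} to the mapping $\Image{x_0}{\mathcal{VOP}}=(h-h(x_0),g):\X\longrightarrow\W\times\Y$. Hypotheses $(i)$ and $(iii)$ give exactly what that theorem needs (here the codomain $\W\times\Y$ is a reflexive Banach space by $(ii)$, normed so that the product structure is respected), so there exists $\epsilon_0>0$ such that the image $\Image{x_0}{\mathcal{VOP}}(\ball{x_0}{\epsilon})$ is \emph{convex} for every $\epsilon\in(0,\epsilon_0]$. Fix such an $\epsilon$ and write $A_\epsilon=\Image{x_0}{\mathcal{VOP}}(\ball{x_0}{\epsilon})$.

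The next step is to locate $x_\epsilon$. Because $\FDer(h,g)(x_0)$ is onto, Proposition \ref{pro:openvectopt} tells us $x_0$ is not a solution of $(\mathcal{VOP}_{x_0,\epsilon})$, so the localized problem must look for its $K$-maximum elsewhere. I would establish existence of a global solution $x_\epsilon$ to $(\mathcal{VOP}_{x_0,\epsilon})$ by a weak-compactness argument: $\ball{x_0}{\epsilon}$ is bounded, closed and convex, hence weakly compact by reflexivity of $\X$ (itself a consequence of uniform convexity, Remark \ref{rem:UCspace}(ii)); combined with the $\CII$ (hence continuous) data and closedness of $C$ and $K$, a $K$-maximal feasible point exists. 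To see that $x_\epsilon$ lies on the boundary $\bd\ball{x_0}{\epsilon}$, I would use Remark \ref{rem:bdPolyprin}: the image of the open ball is contained in the interior of $A_\epsilon$, so any point mapping to the relevant boundary of $A_\epsilon$ must come from $\bd\ball{x_0}{\epsilon}$; optimality forces $\Image{x_0}{\mathcal{VOP}}(x_\epsilon)$ onto $\bd A_\epsilon$, whence $x_\epsilon\in\bd\ball{x_0}{\epsilon}$.

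For the multipliers and the Lagrangian condition, I would separate $A_\epsilon$ from the convex set $\mathcal{Q}=(K\setminus\{\nullv\})\times C$. By Proposition \ref{pro:Koptchar}, $K$-optimality of $x_\epsilon$ for the localized problem is equivalent to $A_\epsilon\cap\mathcal{Q}=\varnothing$ (after recentering at $h(x_\epsilon)$). Since both sets are convex and one has nonempty interior, the Hahn--Banach separation theorem yields a nonzero functional $(w^*_\epsilon,y^*_\epsilon)\in\W^*\times\Y^*$ separating them. Reading off what separation means against the cone factor $K$ and the set factor $C$ gives precisely $w^*_\epsilon\in K^\oplus\setminus\{\nullv^*\}$ and $-y^*_\epsilon\in\NCone{g(x_\epsilon)}{C}$, i.e. (\ref{in:PPvecopt}); and evaluating the separating inequality on the image of an arbitrary $x\in\ball{x_0}{\epsilon}$ reproduces $\langle w^*_\epsilon,h(x_\epsilon)\rangle+\langle y^*_\epsilon,g(x_\epsilon)\rangle\ge\langle w^*_\epsilon,h(x)\rangle+\langle y^*_\epsilon,g(x)\rangle$, which is exactly (\ref{in:PPvecopt2}).

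The main obstacle I anticipate is the separation step, and in particular guaranteeing that the separating functional is genuinely nonzero and that its two components carry the correct sign information simultaneously. One must verify that $\mathcal{Q}$ (or $A_\epsilon$) has nonempty interior in the product topology so that a \emph{proper} separation applies, and then disentangle the single product inequality into the two conditions in (\ref{in:PPvecopt}) without losing nontriviality of $(w^*_\epsilon,y^*_\epsilon)$ — the delicate point being that $w^*_\epsilon$ must land in $K^\oplus\setminus\{\nullv^*\}$ rather than collapsing to $\nullv^*$, which is where the pointedness of $K$ and the fact that $\nullv\notin K\setminus\{\nullv\}$ must be used carefully.
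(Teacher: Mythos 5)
Your skeleton (Polyak's principle, boundary location via Remark \ref{rem:bdPolyprin}, then separation from $K\times C$) is the one the paper uses, but two of your steps, as you describe them, contain genuine gaps.

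First, the existence of $x_\epsilon$. You propose to obtain a $K$-maximal feasible point from weak compactness of $\ball{x_0}{\epsilon}$ combined with continuity of the $\CII$ data and closedness of $C$ and $K$. This is exactly the argument that breaks down in infinite dimensions, and the paper warns against it explicitly in the discussion following the theorem: $h$ and $g$ are norm-continuous but in general \emph{not} weakly continuous, so along a weakly convergent maximizing sequence in the ball the values of $h$ need not converge to the value at the weak limit, and $R=g^{-1}(C)$ need not be weakly closed, so the weak limit need not even be feasible. The paper instead runs the compactness argument in the \emph{image} space: by Polyak's principle the set $A_\epsilon=\Image{x_0}{\mathcal{VOP}}(\ball{x_0}{\epsilon})$ is convex, closed and (shrinking $\epsilon_0$, by continuity) bounded; convexity plus closedness makes it weakly closed, hence weakly compact in the reflexive space $\W\times\Y$; then $A_\epsilon\cap(K\times C)$ is nonempty (by Proposition \ref{pro:openvectopt} and Proposition \ref{pro:Koptchar}) and weakly compact, its projection $\Pi_\W\left(A_\epsilon\cap(K\times C)\right)$ is weakly compact, and an existence theorem for vector optimization (Theorem 6.5$(a)$ in \cite{Jahn04}) supplies a $K$-maximal element $\hat w$; pulling back any $(\hat w,\hat y)$ to a preimage $x_\epsilon\in\ball{x_0}{\epsilon}$ and checking $K$-maximality of $\hat w$ against feasible competitors yields a global solution of $(\mathcal{VOP}_{x_0,\epsilon})$. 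This is precisely where the convexity produced by Polyak's principle is indispensable for \emph{existence}, not merely convenient for separation; your version never uses it there.

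Second, the nonvanishing of $w^*_\epsilon$. You correctly flag this as the delicate point, but the tool you invoke (pointedness of $K$, the fact that $\nullv\notin K\setminus\{\nullv\}$) cannot settle it: Eidelheit separation only gives $(w^*_\epsilon,y^*_\epsilon)\ne(\nullv^*,\nullv^*)$, and vanishing of the objective multiplier is the usual degenerate, Fritz John--type scenario, which no amount of cone geometry excludes. What rules it out in the paper is a constraint-qualification argument based on hypothesis $(iii)$: surjectivity of $\FDer(h,g)(x_0)$ implies surjectivity of $\FDer g(x_0)$, hence by Theorem \ref{thm:LyuGra} $g$ is open at a linear rate, so $g(\ball{x_0}{r})\supseteq\ball{g(x_0)}{\sigma r}$ for suitable $\sigma>0$ and small $r$. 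If $w^*_\epsilon=\nullv^*$, the separating inequality collapses to $\langle y^*_\epsilon,g(x)\rangle\le\alpha$ for all $x\in\ball{x_0}{\epsilon}$, which by the covering property gives $\langle y^*_\epsilon,g(x_0)+\eta u\rangle\le\alpha$ for every $u\in\Sph$ and small $\eta\ge 0$; combined with $\langle y^*_\epsilon,g(x_0)\rangle\ge\alpha$ from the other side of the separation, this forces $\langle y^*_\epsilon,u\rangle\le 0$ for all $u\in\Sph$, i.e. $y^*_\epsilon=\nullv^*$, contradicting nontriviality of the pair. Without this argument the claim $w^*_\epsilon\in K^\oplus\setminus\{\nullv^*\}$ in $(\ref{in:PPvecopt})$ remains unproven.
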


\begin{proof}
Consider the mapping $\Image{\bar x}{\mathcal{VOP}}$ as defined in
$(\ref{eq:defimage})$, with $\bar x=x_0$. Under the assumptions
made, it is $\Image{x_0}{\mathcal{VOP}}\in\CII(\Omega)$ and $\FDer
\Image{x_0}{\mathcal{VOP}}(x_0)=\FDer(h,g)(x_0)$ is onto.
By virtue of hypothesis (i) it is possible to invoke the Polyak's
convexity principle (Theorem \ref{thm:Polyakteo}).
According to it, there exists $\epsilon_0>0$ such that
$\Image{x_0}{\mathcal{VOP}}(\ball{x_0}{\epsilon})$ is a convex
closed set with nonempty interior, for every $\epsilon\in [0,\epsilon_0]$.
Notice that, since $\Image{x_0}{\mathcal{VOP}}$ is continuous
at $x_0$, then without loss of generality one can assume the set
$\Image{x_0}{\mathcal{VOP}}(\ball{x_0}{\epsilon})$ to be also
bounded.
Now, fix any $\epsilon\in (0,\epsilon_0]$. Let us denote by $\Pi_\W:
\W\times\Y\longrightarrow\W$ the projection operator on the space
$\W$. Define $\hat w$ as one of the $K$-minimal element of the set
$$
   \Pi_\W(\Image{x_0}{\mathcal{VOP}}(\ball{x_0}{\epsilon}\cap
  (K\times C)).
$$
Let us show that such definition makes sense, that is
$\hat w$ does actually exist. Since $\FDer
\Image{x_0}{\mathcal{VOP}}(x_0)$ is onto, $x_0$ can not be a
local solution to $(\mathcal{VOP})$. According to Proposition
\ref{pro:Koptchar} it must be
$$
   \Image{x_0}{\mathcal{VOP}}(\ball{x_0}{\epsilon})\cap\mathcal{Q}
   \ne\varnothing
$$
and hence
$$
   \Image{x_0}{\mathcal{VOP}}(\ball{x_0}{\epsilon})\cap (K\times C)
   \ne\varnothing.
$$
Observe that, $\Image{x_0}{\mathcal{VOP}}(\ball{x_0}{\epsilon})$
being convex and closed, it is also weakly closed. As the space
$\W\times\Y$ is reflexive by hypothesis (ii), $\Image{x_0}
{\mathcal{VOP}}(\ball{x_0}{\epsilon})$, which is also bounded,
turns out to be weakly compact.
Since $K\times C$ is convex and closed, it is also weakly closed.
Thus, it is possible to deduce that the nonempty set $(\Image{x_0}
{\mathcal{VOP}}(\ball{x_0}{\epsilon})\cap (K\times C)$ is weakly
compact. On the other hand, the projection mapping $\Pi_\W$
is weakly continuous on $\W\times\Y$. This enables one to conclude
that the image of $\Image{x_0}{\mathcal{VOP}}(\ball{x_0}
{\epsilon}\cap (K\times C)$ through $\Pi_\W$ is weakly compact.
Then, by virtue of a well-known
existence result for vector optimization problems (see, for instance,
Theorem 6.5$(a)$ in \cite{Jahn04}), there must exists $\hat w\in
\Pi_\W(\Image{x_0}{\mathcal{VOP}}(\ball{x_0}{\epsilon})\cap
(K\times C))$, which is $K$-maximal. Now, take any element
$(\hat w,\hat y)\in\Pi_\W^{-1}(\hat w)$. Corresponding to
such a $(\hat w,\hat y)$, there exists $x_\epsilon\in\ball{x_0}
{\epsilon}$ with the property
$$
    \hat w=h(x_\epsilon)-h(x_0),\qquad
    \hat y=g(x_\epsilon).
$$
Notice that, being $(\hat w,\hat y)\in\Image{x_0}{\mathcal{VOP}}
(\ball{x_0}{\epsilon})\cap(K\times C))$, it is $x_\epsilon\in
\ball{x_0}{\epsilon}\cap R$. 
Let us prove that $x_\epsilon$ verifies the first assertion in the
thesis. Ab absurdo, assume that $\tilde x\in\ball{x_0}{\epsilon}
\cap R$ can be found such that
$$
    h(\tilde x)\in (h(x_\epsilon)+K)\backslash\{h(x_\epsilon)\}.
$$
This means that for some $\tilde k\in K\backslash\{\nullv\}$
is has to be
$$
   h(\tilde x)=h(x_\epsilon)+\tilde k,
$$
whence
$$
   h(\tilde x)-h(x_\epsilon)\in K\backslash\{\nullv\}.
$$
Thus, one obtains
\begin{eqnarray}    \label{in:absurdopt}
   h(\tilde x)-h(x_0)=h(\tilde x)-h(x_\epsilon)+h(x_\epsilon)-h(x_0)
  \in\hat w+K,
\end{eqnarray}
with $h(\tilde x)-h(x_0)\ne\hat w$. It follows from $(\ref{in:absurdopt})$
that the $K$-maximality of $\hat w$ is violated. Indeed, it is
$$
   (h(\tilde x)-h(x_0),g(\tilde x))\in\Image{x_0}{\mathcal{VOP}}
  (\ball{x_0}{\epsilon})\cap(K\times C),
$$
because $h(\tilde x)-h(x_0)\in\hat w+K\subseteq K$ and $g(\tilde x)
\in C$.

Observe that the $K$-maximality of $\hat w$ entails that $\hat w\in
\bd\Pi_\W(\Image{x_0}{\mathcal{VOP}}(\ball{x_0}{\epsilon})\cap
(K\times C))$ and this fact, in turn, entails that $(\hat w,\hat y)\in
\bd(\Image{x_0}{\mathcal{VOP}}(\ball{x_0}{\epsilon})\cap
(K\times C))$. Therefore, $(\hat w,\hat y)$ must belong to the boundary
of at least one of the two subsets, $\Image{x_0}{\mathcal{VOP}}
(\ball{x_0}{\epsilon})$ or $K\times C$. If it were $(\hat w,\hat y)
\in\inte\Image{x_0}{\mathcal{VOP}}(\ball{x_0}{\epsilon})$, it would
exist $\hat k\in K\backslash\{\nullv\}$ such that $(\hat w+\hat k,
\hat y)\in\Image{x_0}{\mathcal{VOP}}(\ball{x_0}{\epsilon})$. Since
$(\hat w+\hat k,\hat y)\in K\times C$, this would be inconstistent
with the $K$-maximality of $\hat w$. So one can conclude that
$(\hat w,\hat y)\in\bd\Image{x_0}{\mathcal{VOP}}(\ball{x_0}{\epsilon})$.
Then, according to what noticed in Remark \ref{rem:bdPolyprin},
$x_\epsilon$ must belong to $\bd\ball{x_0}{\epsilon}$. By using
again the characterization of $K$-optimality for problem
$(\mathcal{VOP}_{x_0,\epsilon})$, one obtains
$$
    \Image{x_\epsilon}{\mathcal{VOP}}(\ball{x_0}{\epsilon})\cap
    \mathcal{Q}=\varnothing.
$$
From the definition of mapping $\Image{x_\epsilon}{\mathcal{VOP}}$
one can readily see that it holds
$$
    \Image{x_\epsilon}{\mathcal{VOP}}(x)= \Image{x_0}{\mathcal{VOP}}
   (x)+(h(x_0)-h(x_\epsilon),\nullv),\quad\forall x\in\X.
$$
Therefore, as a mere translation of a convex, closed set with
nonempty interior (remember Remark \ref{rem:bdPolyprin}),
also $\Image{x_\epsilon}{\mathcal{VOP}}(\ball{x_0}
{\epsilon})$ has such properties. Being disjoint from $\mathcal{Q}$,
it can be linearly separated from $\cl\mathcal{Q}=K\times C$, by virtue of the
Eidelheit's theorem. This means that there exists $(w^*_\epsilon,
y^*_\epsilon)\in(\W^*\times\Y^*)\backslash\{(\nullv^*,\nullv^*)\}$ and
$\alpha\in\R$ such that
\begin{eqnarray}     \label{in:Eidel1}
    \langle w^*_\epsilon, h(x)-h(x_\epsilon)\rangle +
    \langle y^*_\epsilon,g(x)\rangle\le\alpha,\quad\forall x\in
    \ball{x_0}{\epsilon},
\end{eqnarray}
and
\begin{eqnarray}     \label{in:Eidel2}
   \langle w^*_\epsilon, w\rangle +\langle y^*_\epsilon,y\rangle
   \ge\alpha,\quad\forall (w,y)\in K\times C.
\end{eqnarray}
If $x=x_\epsilon$, from inequality $(\ref{in:Eidel1})$ one gets
$$
    \langle y^*_\epsilon,g(x_\epsilon)\rangle\le\alpha.
$$
On the other hand, being $(\nullv,g(x_\epsilon))\in K\times C$,
from inequality $(\ref{in:Eidel2})$ one has
$$
    \langle y^*_\epsilon,g(x_\epsilon)\rangle\ge\alpha,
$$
whence
\begin{eqnarray}       \label{eq:compalf}
    \langle y^*_\epsilon,g(x_\epsilon)\rangle=\alpha.
\end{eqnarray}
As for every $y\in C$ it is $(\nullv,y)\in K\times C$, one has
$$
   \langle y^*_\epsilon,y\rangle\ge\alpha,
$$
whence it results in
\begin{eqnarray}      \label{in:Nconecond}
    \langle y^*_\epsilon,y-g(x_\epsilon)\rangle\ge 0,
    \quad\forall y\in C.
\end{eqnarray}
This entails that $-y^*_\epsilon\in\NCone{g(x_\epsilon)}{C}$.
To complete the proof of $(\ref{in:PPvecopt})$, take an arbitrary
$w\in K$. Being $(w,g(x_\epsilon))\in K\times C$, from
$(\ref{in:Eidel2})$ one gets
$$
    \langle w^*_\epsilon,w\rangle\ge 0,
$$
that is $w^*_\epsilon\in K^\oplus$.
To show that $w^*_\epsilon\ne\nullv^*$, assume instead that
$w^*_\epsilon=\nullv^*$. By the open covering property of $g$
around $(x_0,g(x_0))$, which is a consequence of the surjectivity of
$\FDer g(x_0)$, it holds
$$
    g(\ball{x_0}{r})\supseteq\ball{g(x_0)}{\sigma r}
$$
for proper positive $\sigma$ and $r<\epsilon$. In the light of $(\ref{in:Eidel1})$
this yields
$$
    \langle y^*_\epsilon,g(x_0)+\eta u\rangle\le\alpha,\quad
   \forall u\in\Sph,\ \forall\eta\in [0,\sigma r),
$$
whereas, by inequality $(\ref{in:Eidel2})$, the inclusion
 $(\nullv,g(x_\epsilon))\in K\times C$ implies
$$
     \langle y^*_\epsilon,g(x_0)\rangle\ge\alpha.
$$ 
Consequently, one finds
$$
    \eta\langle y^*_\epsilon,u \rangle\le\alpha-
   \langle y^*_\epsilon,g(x_0)\rangle\le 0,\quad\forall
    u\in\Sph,
$$
which evidently contradicts the fact that $y^*_\epsilon\ne\nullv^*$
(remember that $(w^*_\epsilon,y^*_\epsilon)\in\W^*\times\Y^*
\backslash\{(\nullv^*,\nullv^*)\}$).

To conclude the  proof it suffices to observe that $(\ref{in:PPvecopt2})$ 
is a straightforward consequence of inequality $(\ref{in:Eidel1})$ and
of $(\ref{eq:compalf})$.
This completes the proof.
\end{proof}

\begin{remark}     \label{rem:compcon}
(i) In view of a subsequent application of Theorem \ref{thm:locvecopt},
it is to be noted that, whenever set $C$ is, in particular, a cone with
apex at the null vector of $\Y$, then the thesis of the theorem can
be refined by adding that
$$
    y^*_\epsilon\in\{g(x_\epsilon)\}^\perp.
$$
To see this, it suffices to put $y=2g(x_\epsilon)$ and then $y=\nullv$ in
inequality $(\ref{in:Nconecond})$, which is valid all over $C$.

(ii) A remarkable feature of Theorem \ref{thm:locvecopt} is that
the multiplier $w^*_\epsilon$, corresponding to the solution to
$(\mathcal{VOP}_{x_0,\epsilon})$, does not vanish.

(iii) Theorem \ref{thm:locvecopt} extends to the context of vector
optimization Theorem 4.1 in \cite{Poly01a}. Nevertheless, as it is
possible to show by means of easy counterexamples, the uniqueness
of the solution to $\epsilon$-localizations of the problem, which is
valid in scalar optimization, can not be restored in such extension.
\end{remark}

As a comment to Theorem \ref{thm:locvecopt} it is worth noting that
its thesis relates to two different issues arising in the study of
optimization problems. The first one has to do with the existence
of solution to $\epsilon$-localizations of the original problem.
Whereas the solution existence for localizations comes out
automatically in the case of finite-dimensional problems, because
$h$ and $g$ are locally continuous around $x_0$ and $\ball{x_0}
{\epsilon}$ is compact, the question becomes subtler when $\X$
is infinite-dimensional. In such circumstance, indeed, $\ball{x_0}
{\epsilon}$ turns out to be weakly compact as $\X$ is reflexive,
but $h$ may fail to be weakly continuous, in the absence of any 
convexity assumption, and, for a similar reason, $R=g^{-1}(C)$
may fail to be weakly closed. It is at that point that one appreciates the
power of the Polyak's convexity principle.
The second aspect is relevant independently of the dimension
of the underlying space.
It deals with the necessary optimality condition, which turns out to
hold  at a solution to a $\epsilon$-localization of
$(\mathcal{VOP})$. It is well known that standard optimality
conditions for problems with smooth data can only prescribe
stationarity for the Lagrangian function associated with the problem,
in the absence of convexity assumptions. In contrast with this,
resting upon the Polyak's principle, Theorem \ref{thm:locvecopt}
guarantees the maximality of solutions also for the Lagrangian
function, for a proper choice of multipliers.

Again note that,
as it typically happens in convex optimization, conditions 
$(\ref{in:PPvecopt})$ and $(\ref{in:PPvecopt2})$ appearing in
Theorem  \ref{thm:locvecopt} are almost a characterization
of $K$-optimality for problem $(\mathcal{VOP}_{x_0,\epsilon})$.
In other terms, any element of $R\cap\ball{x_0}{\epsilon}$
satisfying condition $(\ref{in:PPvecopt2})$ and an enforcement
of condition $(\ref{in:PPvecopt})$ can be shown to solve
$(\mathcal{VOP}_{x_0,\epsilon})$. This is done below.

\begin{proposition}
Under the same hypotheses of Theorem  \ref{thm:locvecopt},
in the same notations, let $\epsilon\in (0,\epsilon_0]$ and
$z\in\ball{x_0}{\epsilon}$. If there exists $(w^*,y^*)\in(\W^*\times
\Y^*)\backslash\{(\nullv^*,\nullv^*)\}$ fulfilling the following
conditions:
\begin{eqnarray}     \label{in:sufcon1}
   w^*\in K^\oplus,\quad \krn w^*=\{\nullv\},
    \qquad\hbox{and}\qquad
   -y^*\in\NCone{g(z)}{C},
\end{eqnarray}
and
\begin{eqnarray}     \label{in:sufcon2}
   \Lagr(w^*,y^*;z)\ge\Lagr(w^*,y^*;x),\quad\forall
   x\in\ball{x_0}{\epsilon},
\end{eqnarray}
then $z$ is a solution to $(\mathcal{VOP}_{x_0,\epsilon})$.
\end{proposition}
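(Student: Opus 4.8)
The plan is to prove the statement by contradiction, reducing it to the characterization of $K$-optimality. First I would observe that $z$ is actually feasible for the localization: the condition $-y^*\in\NCone{g(z)}{C}$ can be satisfied only if $g(z)\in C$, since the normal cone of convex analysis is empty at points outside $C$; hence $z\in R\cap\ball{x_0}{\epsilon}$, the feasible region of $(\mathcal{VOP}_{x_0,\epsilon})$. In view of Proposition \ref{pro:Koptchar}, applied to the localized problem, it then suffices to show
$$
   \Image{z}{\mathcal{VOP}}(\ball{x_0}{\epsilon})\cap\mathcal{Q}=\varnothing,
$$
i.e. that no point of $\ball{x_0}{\epsilon}\cap R$ strictly $K$-dominates $z$ through $h$.

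Next I would translate the two conditions $(\ref{in:sufcon1})$ into pointwise inequalities. From $-y^*\in\NCone{g(z)}{C}$ and the definition of the normal cone one gets
$$
   \langle y^*,y-g(z)\rangle\ge 0,\qquad\forall y\in C,
$$
while from $w^*\in K^\oplus$ combined with $\krn w^*=\{\nullv\}$ one gets the strict positivity
$$
   \langle w^*,k\rangle>0,\qquad\forall k\in K\backslash\{\nullv\},
$$
because membership in $K^\oplus$ forces $\langle w^*,k\rangle\ge 0$, while $k\notin\krn w^*$ rules out equality. I expect this strict positivity --- which is exactly the enforcement of $(\ref{in:PPvecopt})$ --- to be the pivotal point of the whole argument: were one to assume merely $w^*\in K^\oplus\backslash\{\nullv^*\}$, the estimate below would be only non-strict and no contradiction would arise.

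Finally I would close the contradiction. Suppose $z$ does not solve $(\mathcal{VOP}_{x_0,\epsilon})$; then there is $\tilde x\in R\cap\ball{x_0}{\epsilon}$ with $h(\tilde x)-h(z)=k$ for some $k\in K\backslash\{\nullv\}$. Evaluating the Lagrangian inequality $(\ref{in:sufcon2})$ at $x=\tilde x\in\ball{x_0}{\epsilon}$ and rearranging the terms yields
$$
   \langle w^*,k\rangle\le\langle y^*,g(z)-g(\tilde x)\rangle.
$$
Since $\tilde x\in R$ gives $g(\tilde x)\in C$, the normal cone inequality taken at $y=g(\tilde x)$ shows $\langle y^*,g(z)-g(\tilde x)\rangle\le 0$, whence $\langle w^*,k\rangle\le 0$, in contradiction with the strict positivity of $w^*$ on $K\backslash\{\nullv\}$. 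Hence no dominating $\tilde x$ exists, the displayed emptiness condition holds, and $z$ is a global solution to $(\mathcal{VOP}_{x_0,\epsilon})$.
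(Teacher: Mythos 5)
Your proof is correct and follows essentially the same route as the paper's: both argue by contradiction, using the normal cone condition to get $\langle y^*,g(\tilde x)-g(z)\rangle\ge 0$ for a hypothetical dominating point, the Lagrangian inequality $(\ref{in:sufcon2})$ to bound $\langle w^*,h(\tilde x)-h(z)\rangle$, and the kernel condition $\krn w^*=\{\nullv\}$ to close the contradiction. The only cosmetic difference is that you package the kernel condition up front as strict positivity of $w^*$ on $K\backslash\{\nullv\}$, whereas the paper first derives $\langle w^*,h(\hat x)-h(z)\rangle=0$ and then invokes the triviality of $\krn w^*$ to force $h(\hat x)=h(z)$; your explicit remark that $-y^*\in\NCone{g(z)}{C}$ already guarantees feasibility of $z$ is a nice addition the paper leaves implicit.
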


\begin{proof}
Take an arbitrary $x\in R\cap\ball{x_0}{\epsilon}$. Being $g(x)\in C$,
by virtue of the third relation in $(\ref{in:sufcon1})$, one has
$$
    \langle y^*,g(x)-g(z)\rangle\ge 0.
$$
Therefore, from inequality $(\ref{in:sufcon2})$ it follows
$$
    \langle w^*,h(z)\rangle\ge \langle w^*,h(x)\rangle+\langle y^*,
   g(x)-g(z)\rangle\ge\langle w^*,h(x)\rangle,\quad\forall
   x\in R\cap\ball{x_0}{\epsilon},
$$
whence
\begin{eqnarray}    \label{in:whdif}
    \langle w^*,h(z)-h(x)\rangle\ge 0,,\quad\forall
   x\in R\cap\ball{x_0}{\epsilon}.
\end{eqnarray}
Now, assume ab absurdo that $z$ fails to be a solution to
$(\mathcal{VOP}_{x_0,\epsilon})$. Then, there must exists
$\hat x\in R\cap\ball{x_0}{\epsilon}$ such that
\begin{eqnarray}      \label{in:znotopt}
    h(\hat x)\in (h(z)+K)\backslash\{h(z)\},
\end{eqnarray}
and hence
$$
    h(\hat x)-h(z)\in K\backslash\{\nullv\}.
$$
Consequently, since it is $w^*\in K^\oplus$, one finds
$$
    \langle w^*,h(\hat x)-h(z)\rangle\ge 0,
$$
which, along with inequality $(\ref{in:whdif})$, implies
$$
    \langle w^*,h(\hat x)-h(z)\rangle=0.
$$
In the light of the condition in $(\ref{in:sufcon1})$ on the
triviality of $\krn w^*$, the last equality allows one to conclude
that $h(\hat x)=h(z)$, what contradicts inclusion $(\ref{in:znotopt})$.
Thus the proof is complete.
\end{proof}


\section{An application to welfare economics}   \label{Sect:4}

\subsection{The economic model}

This section is concerned with a model of pure exchange economy,
considering finitely many consumers. Private commodities to be consumed
(or desired) by them are formalized as elements of a vector space $(\X,\|\cdot\|)$,
which is assumed to be a real Banach space. This allows one to
modelize economies with an infinite-dimensional commodity space.
Motivations for considering such kind of models, widely recognized
in the modern mathematical economics literature, are discussed
for instance in \cite{AlCoTo02} and in some references therein.

Let $I=\{1,\dots,n\}$ index the consumer set. Each consumer $i\in I$
is described in the model by:
\begin{itemize}

\item a (nonempty) consumption set $\Omega_i\subseteq\X$, representing
the set of commodities, where consumer $i$ makes her choices;

\item a utility function $u_i:\Omega_i\longrightarrow\R$, representing
preferences of the consumer $i$ over commodities.

\end{itemize}
Then, the set
$$
    \Omega=\prod_{i\in I}\Omega_i\subseteq\X^n
$$
defines the {\em social consumption set}, where $\X^n$ is the
$n$ times Cartesian product of $\X$. Its elements are consumption
boundles, denoted by $\commv{x}=(x_1,\dots,x_n)$, with $x_i\in\X$,
for every $i\in I$. $\X^n$ will be structured as a direct sum,
equipped with the $2$-norm $\|\cdot\|_2$. 
Notice that in this model the happiness of each consumer is
affected only by those commodities that she may consume, not by 
commodities considered by other consumers. Such a circumstance
is labelled by stating that the consumers have separable utilities.
As it is classical in general equilibrium theory, this excludes
strategical interactions between consumers (instead typical in
game theory), focussing on how agents in the economy respond
to price systems stimulations.

A vector $\aie\in\X$ denotes the {\em aggregate initial endowment}
of the model, whereas
$\Theta\subseteq\X$ represents the {\em net demand constraint}.
In this setting, a boundle $\commv{x}\in\X^n$ is said to be a
{\em feasible} (or {\em attainable}) {\em allocation} if
$$
    \commv{x}\in\Omega \qquad\hbox{ and }\qquad
    \sum_{i\in I}x_i-\aie\in\Theta.
$$
Notice that, in the case in which $\X$ is a partially ordered vector
space and it is $\Theta=-\X_+$, the feasibility condition for allocations
becomes
$$
    \sum_{i\in I} x_i \le_{\X_+}\aie.
$$
Nevertheless, such an order structure on the commodity space
will be not required in the present approach.

The feasibility constraint is expressed by means of the mapping
$c:\Omega\longrightarrow\X$
$$
    c(\commv{x})=\sum_{i\in I}x_i-\aie.
$$
The set $\mathcal{A}=\{\commv{x}\in\Omega:\  c(x)=\sum_{i\in I}x_i-\aie
\in\Theta\}$ collects all feasible allocations.

The commodity-price duality associated with the model is
indicated by $\langle\X^*,\X\rangle$. This means that the elements
of the dual space $\X^*$ have to be interpreted as prices, so that
the value of a commodity $x\in\X$ at a price $p\in\X^*$ is
denoted by $\langle p,x\rangle$.

The resulting economy is therefore defined by
$$
   \mathcal{E}=(I,\langle\X^*,\X\rangle,(\Omega_i,u_i)_{i\in I},
   \aie,\Theta).
$$
After the pioneering work of L. Walras, given an economy $\mathcal{E}$,
a great amount of quantitative studies on the principles
ruling its mechanism are focussed on general equilibrium theory
(historical commentaries can be found, for instance, in \cite{AlCoTo02,Aubi93,
Mord06II}). In this theory, the concept of Pareto optimal allocation
and the notion of equilibrium play a crucial role.

With reference to an exchange economy $\mathcal{E}$, a feasible
allocation $\bar\commv{x}\in\mathcal{A}$ is said to be {\em Pareto optimal}
if it is $\R^n_+$-optimal for the vector optimization problem
$$
   \Vmax{\R^n_+}_{\commv{x}\in\Omega} u(\commv{x})\quad
   \hbox{ subject to }\quad
   \commv{x}\in\mathcal{A},  \leqno (\mathcal{POP})
$$
where $u:\Omega\longrightarrow\R^n$ is the multiobjective mapping
that arrays the utility functions of all the consumers, i.e.
$$
    u(\commv{x})=(u_1(x_1),\dots, u_n(x_n)).
$$
Notice that, this being the case, the space $\R^n$ is partially ordered
by the natural componentwise order relation. Roughly speaking, Pareto optimality
for $(\mathcal{POP})$ denotes any feasible allocation, which can not
increase the happiness of any consumer without decreasing that
of another one. In this sense, mapping $u$ quantifies the social
efficiency of a given consumption boundle.

The notion of equilibrium is more involved. In the model under
consideration (in fact, in many others), it can not be disjoined
from the notion of supporting price, which lies at the very core
of the decentralization mechanism. A price system, in an equilibrium
situation, should be able to summarize the informations on relative
scarcities in the given economy; consequently, it can be imagined
to induce a distribution among the consumers of the aggregate
initial endowment, according to which each consumer
maximizes her utility function over her budget set, as the latter
results from the endowment distribution. More precisely,
given a price $p\in\X^*\backslash\{\nullv^*\}$, a distribution induced by
$p$ of the aggregate endowment $\aie$ among the consumers is
any boundle $(\aie_i)_{i\in I}\in\X^n$ such that
$$
    \left\langle p,\sum_{i=1}^n \aie_i\right\rangle=\langle p,\aie\rangle.
$$
It is worth noting that endowment distributions are not uniquely
defined by a price system and by the aggregate initial endowment.
The notion of equilibrium can be therefore formalized as follows.

\begin{definition}
With reference to an exchange economy $\mathcal{E}$, a feasible allocation
$\bar\commv{x}\in\mathcal{A}$ is called a ({\em competitive}) {\em equilibrium}
if there exists a price system $\bar p\in\X^*\backslash\{\nullv^*\}$
that supports $\bar\commv{x}$, in the sense that all the following
conditions are fulfilled:

\noindent (i) $\bar p\in\NCone{\sum_{i\in I}\bar x_i-\aie}{\Theta}$
\qquad (price positivity);

\noindent (ii) $\langle\bar p, \sum_{i\in I}\bar x_i\rangle=
\langle\bar p,\aie\rangle$ \qquad (market clear condition);

\noindent (iii) $\bar p$ induces an endowment distribution
$(\aie_i)_{i\in I}$ of $\aie$, according to which
$$
   u_i(\bar x_i)=\max_{x_i\in\Omega_i} u_i(x_i) \hbox{ subject to }
   \langle\bar p,x_i\rangle\le\langle\bar p,\aie_i\rangle,\quad
   \forall i\in I, \qquad \hbox{(individual optimality)}
$$
with $\langle\bar p,\bar x_i\rangle=\langle\bar p,\aie_i\rangle$.
\end{definition}

In the study of welfare economics, the above two notions appear
to be strictly intertwined by two classical fundamental results,
known as first and second welfare theorem. Roughly speaking, under appropriate
assumptions, the first welfare theorem states that every equilibrium
is Pareto optimal, whereas the second one is concerned with the
opposite implication (for their first formulation in a modern
setting the reader is referred to \cite{Arro51} and \cite{Debr54}).
A critical feature of the original theory is that such achievements
can be obtained by making
an essential use of convexity. In the more recent literature devoted
to welfare economics, an active research line revolves around the
extension of the second welfare theorem to models of nonconvex
economies (see, among the others, \cite{AlCoTo02,BonCor88,FlGoJo06,
Gues75,Jofr00,MalMor01,Mord00,Mord06II}). The reason justifying such
an interest has to do with the fact that, as well recognized in the
economic literature, the relevance of convexity assumptions is
doubtful, when even not contradicted in concrete models. A detailed
discussion of such difficulty and of various attempts to overcome it
can be found in the references cited above.

In this paper, starting with the same problem, a different perspective
on the issue is considered. Trying to interpret the spirit of local
programming, it is shown that, even in the very absence of convexity
assumptions, if properly localized, an exchange economy admits
feasible Pareto optimal allocations near a special class of
commodity boundles, here termed regular. If some additional
conditions are satisfied, these Pareto optimal allocations reveal
to be equilibria.


\subsection{Model assumptions}

In this subsection all assumptions, upon which the result next
presented in the paper is established, are listed and discussed.

\begin{itemize}

\item[(${\bf A}_1$)] The commodity space  $(\X,\|\cdot\|)$ is supposed to
be a uniformly convex real Banach space, whose modulus of convexity
satisfies the quadratic growth condition, and such that $(\X^n,\|\cdot\|_2)$
fulfils the same property (remember Remark \ref{rem:UCspace}(iii)).

\item[(${\bf A}_2$)] Each consumption set has nonempty interior, i.e.
$$
    \inte\Omega_i\ne\varnothing,\quad\forall i\in I.
$$
This implies the existence of commodities in the consumption set,
whose small perturbations in any direction still belong to the
consumption set. Technically, such an assumption is connected with
the next one.

\item[(${\bf A}_3$)] The utility function of each consumer is
a $\CII$ function, i.e.
$$
   u_i\in\CII(\inte\Omega_i),\quad\forall i\in I.
$$
As a comment to such assumption, note that, in the model under
consideration, it is implicitly supposed that each consumer's
observed preferences agree with the behaviour axioms, under
which the existence  of an utility function can be derived. The
latter being not a primitive concept, one should complement
the analysis of the behavioural axioms, justifying the specific
property $\CII$ requested on $u_i$. In this regard, take into
account that the assumption on $u_i$ to be ${\rm C}^2$, often
made when dealing with smooth utility functions, entails in
particular (${\bf A}_3$). This stronger assumption is discussed
in \cite{Debr72}.

\item[(${\bf A}_4$)] The net demand constraint set $\Theta$ is a
(nonempty) closed, convex, cone (with apex at $\nullv$). Its
introduction allows one to provide a unifying framework
for different situations arising in economic models. For example,
$\Theta$ may reduce to $\{\nullv\}$, when the market clear condition
is forced by the model over all feasible allocations. Otherwise,
$\Theta$ may coincide with $-\X_+$, in the presence of implicit free
disposal of commodities. Again, it may describe situations in
which information is incomplete or/and uncertainty enters the
economic model.

\item[(${\bf A}_5$)] The following local qualification condition
for the endowment distribution induced by a price system is
supposed to hold: 
for every $\commv{x}\in\mathcal{A}$ and for every $p\in
\NCone{\sum_{i\in I}x_i-\aie}{\Theta}$, $p$ induces an endowment
distribution $(\aie_i)_{i\in I}$ among consumers, such that for
every $\epsilon>0$ and for $i\in I$ there exists $z_i\in\ball{x_i}
{\epsilon}$ with the property
$$
   \langle p,z_i\rangle<\langle p,\aie_i\rangle.
$$
The above condition says that, near the $i$-th component of
each feasible allocation, any price system yields a budget
set for the consumer $i$, which contains, among others, commodities
not exhausting the endowment share distributed to $i$.

\item[(${\bf A}_6$)] Each consumer $i\in I$ is supposed to be
locally non-satiated with respect to subsets of $\Omega_i$.
This amounts to say that, for every $i\in I$, one has
$$
    \forall x\in\Omega_i,\ \forall\epsilon>0, \hbox{ and }
    \forall S\subseteq\Omega_i,\hbox{ with } 
   \ball{x}{\epsilon}\cap S\ne\varnothing, 
    \quad\exists z\in\ball{x}{\epsilon}\cap S
   \hbox{ such that } u_i(z)>u_i(x).
$$
This last assumption is an enforced version of  a well-known
condition, usually appearing in model of welfare economics.

\end{itemize}


\subsection{Regular feasible allocations}

In the setting under consideration, the localization approach
to the analysis of welfare economics leads to single out the following
class of feasible allocations, to which the next result applies.

\begin{definition}
With reference to an exchange economy $\mathcal{E}$, whose elements
satisfy assumptions $({\bf A}_1)-({\bf A}_3)$, a feasible
allocation $\commv{x}_0\in\mathcal{A}$ is said to be {\em regular}
if $\commv{x}_{0,i}\in\inte\Omega_i$, for every $i\in I$, and
$\FDer(u,c)(\commv{x}_0)$ is onto.
\end{definition}

\begin{remark}
As an immediate consequence of Proposition \ref{pro:openvectopt},
one has that if $\commv{x}_0\in\mathcal{A}$ is a regular allocation
for $\mathcal{E}$, then it can not be a Pareto optimal one for any
$\epsilon$-localization around $\commv{x}_0$ of problem
$(\mathcal{POP})$.
\end{remark}

When dealing with equilibria of an economy, the notion of problem
localization must be somehow adapted. Namely,
given a feasible allocation $\commv{x}_0$ and $n$ positive reals
$\epsilon_1,\dots,\epsilon_n$, a $(\epsilon_1,\dots,\epsilon_n)$-localization
of an economy $\mathcal{E}$ around $\commv{x}_0$ is the exchange
economy defined by
\begin{eqnarray}      \label{eq:localecon}
   \mathcal{E}_{\commv{x}_0,\epsilon_1,\dots,\epsilon_n}=
   (I,\langle\X,\X^*\rangle,(\ball{x_{0,i}}{\epsilon_i},u_i)_{i\in I},
   \aie,\Theta).
\end{eqnarray}
Having done that, one is in a position to formulate the following
result.

\begin{theorem}      \label{thm:welecon}
Let $\mathcal{E}$ be an exchange economy satisfying assumptions
$({\bf A}_1)-({\bf A}_4)$ and let $\commv{x}_0\in\mathcal{A}$ be a
regular feasible allocation for $\mathcal{E}$. Then there exists
$\epsilon_0>0$ such that, for every $\epsilon\in (0,\epsilon_0]$
there is $\commv{x}_\epsilon\in\mathcal{A}\cap\bd\ball{\commv{x_0}}
{\epsilon}$, which is Pareto optimal for $(\mathcal{POP}_{\commv{x_0},
\epsilon})$.
If, in addition, assumption $({\bf A}_5)-({\bf A}_6)$ hold true and,
letting $\eta_i=\|x_{\epsilon,i}-x_{0,i}\|$, it results in
$$
    \min\{\eta_1,\dots,\eta_n\}>0,
$$
then
such a $\commv{x}_\epsilon$ is an equilibrium of the localized economy
$\mathcal{E}_{\commv{x_0},\eta_1,\dots,\eta_n}$.
\end{theorem}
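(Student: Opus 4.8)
The plan is to deduce Theorem \ref{thm:welecon} from the general localization result, Theorem \ref{thm:locvecopt}, by recognizing $(\mathcal{POP})$ as an instance of $(\mathcal{VOP})$ with the specific data of the economy. Concretely, I would set $\X$ of Theorem \ref{thm:locvecopt} to be the social consumption space $(\X^n,\|\cdot\|_2)$, take $\W=\R^n$ ordered by $K=\R^n_+$, take $\Y=\X$ with $g=c$ and $C=\Theta$, and take $h=u$. Assumption $({\bf A}_1)$ supplies hypothesis $(i)$ (uniform convexity of $\X^n$ with quadratic growth), assumption $({\bf A}_4)$ together with reflexivity of $\X$ (which follows from uniform convexity via the Milman--Pettis theorem, Remark \ref{rem:UCspace}(ii)) supplies hypothesis $(ii)$ and the requirement that $C=\Theta$ be nonempty, closed, convex, while $\R^n$ is trivially reflexive; assumptions $({\bf A}_2)$, $({\bf A}_3)$ together with regularity of $\commv{x}_0$ supply hypothesis $(iii)$, since $\commv{x}_{0,i}\in\inte\Omega_i$ makes $u$ a $\CII$ map on a neighbourhood $\Omega$ of $\commv{x}_0$ and $\FDer(u,c)(\commv{x}_0)$ is onto by definition of regularity. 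Applying Theorem \ref{thm:locvecopt} then yields $\epsilon_0>0$ and, for each $\epsilon\in(0,\epsilon_0]$, a point $\commv{x}_\epsilon\in\bd\ball{\commv{x_0}}{\epsilon}$ that globally solves $(\mathcal{POP}_{\commv{x_0},\epsilon})$, which is precisely the first (Pareto optimality) assertion, together with multipliers $(w^*_\epsilon,y^*_\epsilon)=((\lambda_1,\dots,\lambda_n),p)$ satisfying the Lagrangian and normal-cone conditions $(\ref{in:PPvecopt})$--$(\ref{in:PPvecopt2})$.

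For the second part I would translate the dual information into the equilibrium conditions. Writing $w^*_\epsilon=(\lambda_1,\dots,\lambda_n)\in\R^n_+\setminus\{\nullv^*\}$ and $\bar p=-y^*_\epsilon$, condition $(\ref{in:PPvecopt})$ gives $\bar p\in\NCone{c(\commv{x}_\epsilon)}{\Theta}=\NCone{\sum_i x_{\epsilon,i}-\aie}{\Theta}$, which is exactly the price positivity condition (i). Since $\Theta$ is a cone with apex at $\nullv$ (assumption $({\bf A}_4)$), Remark \ref{rem:compcon}(i) applies and furnishes $y^*_\epsilon\in\{c(\commv{x}_\epsilon)\}^\perp$, i.e. $\langle \bar p,\sum_i x_{\epsilon,i}-\aie\rangle=0$, which is the market clear condition (ii). The substantive work is condition (iii). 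Here I would first argue that $\bar p\ne\nullv^*$: the multiplier $(w^*_\epsilon,y^*_\epsilon)$ is nonzero, and if $\bar p=\nullv^*$ the Lagrangian inequality $(\ref{in:PPvecopt2})$ would reduce to $\sum_i\lambda_i u_i(x_{\epsilon,i})\ge\sum_i\lambda_i u_i(x_i)$ for all $\commv{x}\in\ball{\commv{x_0}}{\epsilon}$, a pure maximization of a weighted sum of utilities on a ball, which the local non-satiation assumption $({\bf A}_6)$ contradicts at an interior component whenever the corresponding $\lambda_i>0$, while $\min_i\eta_i>0$ keeps us genuinely inside the ball; combined with $w^*_\epsilon\ne\nullv^*$ this forces $\bar p\ne\nullv^*$, so $\bar p$ is an admissible price system.

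To obtain individual optimality I would use the separability of utilities: the Lagrangian splits as $\Lagr(w^*_\epsilon,y^*_\epsilon;\commv{x})=\sum_i\big(\lambda_i u_i(x_i)-\langle\bar p,x_i\rangle\big)+\langle\bar p,\aie\rangle$, and because $(\mathcal{POP}_{\commv{x_0},\epsilon})$ localizes each component to $\ball{x_{0,i}}{\epsilon_i}$ with $\epsilon_i=\eta_i$, the maximization $(\ref{in:PPvecopt2})$ decouples into $n$ independent problems $\max_{x_i\in\ball{x_{0,i}}{\eta_i}}\big(\lambda_i u_i(x_i)-\langle\bar p,x_i\rangle\big)$, each solved by $x_{\epsilon,i}$. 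Invoking assumption $({\bf A}_5)$ to produce, via the induced endowment distribution $(\aie_i)_{i\in I}$, a cheaper point in each budget set, I would run the standard budget argument: set $\bar x_i=x_{\epsilon,i}$, show $\langle\bar p,\bar x_i\rangle=\langle\bar p,\aie_i\rangle$ using (ii) and the compatibility of the distribution with the market clear condition, and then show that no point of $\Omega_i$ in the budget set strictly beats $u_i(\bar x_i)$, again leaning on local non-satiation $({\bf A}_6)$ and the cheaper-point condition $({\bf A}_5)$ to rule out the boundary degeneracy. The main obstacle I anticipate is precisely this last passage: converting the Lagrangian maximality over the ball, which carries the multiplier $\lambda_i$, into genuine utility maximization over the budget set $\{x_i\in\Omega_i:\langle\bar p,x_i\rangle\le\langle\bar p,\aie_i\rangle\}$, and doing so requires $\lambda_i>0$ for each $i$ (so the weighting does not annihilate a consumer's objective) and the cheaper-point condition to avoid the classical failure of equilibrium at satiation or budget-boundary points; the hypothesis $\min_i\eta_i>0$ is what guarantees each component moved strictly off $\commv{x}_0$, keeping the localized problem non-degenerate so that $({\bf A}_5)$ and $({\bf A}_6)$ can be brought to bear componentwise.
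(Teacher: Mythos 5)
Your overall strategy is the paper's own: instantiate Theorem \ref{thm:locvecopt} on $(\X^n,\|\cdot\|_2)$ with $h=u$, $g=c$, $C=\Theta$, $\Omega=\prod_{i\in I}\inte\Omega_i$, read the price off as $p_\epsilon=-y^*_\epsilon$, get price positivity from $(\ref{in:PPvecopt})$, market clearing from Remark \ref{rem:compcon}(i), nonvanishing of the price from $({\bf A}_6)$, and individual optimality plus positivity of the utility weights from $({\bf A}_5)$. The first (Pareto existence) assertion is handled exactly as in the paper and is complete.

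There is, however, a genuine gap at the step you describe as the decoupling, and it is the crux of the second part. You justify splitting the Lagrangian maximality $(\ref{in:PPvecopt2})$ into $n$ independent problems on the grounds that ``$(\mathcal{POP}_{\commv{x}_0,\epsilon})$ localizes each component to $\ball{x_{0,i}}{\eta_i}$''. It does not: the constraint of the localized problem is the single ball $\ball{\commv{x}_0}{\epsilon}$ of the $2$-norm on $\X^n$, which is \emph{not} the product $\prod_{i\in I}\ball{x_{0,i}}{\eta_i}$, and maximality of a separable function over a $2$-norm ball does not decouple by itself. What rescues the decoupling --- and this is precisely the computation the paper performs --- is that Theorem \ref{thm:locvecopt} places $\commv{x}_\epsilon$ on $\bd\ball{\commv{x}_0}{\epsilon}$, so that $\sum_{i\in I}\eta_i^2=\epsilon^2$; hence replacing the $j$-th component of $\commv{x}_\epsilon$ by any $z_j\in\ball{x_{0,j}}{\eta_j}$ produces a bundle whose distance from $\commv{x}_0$ is $\left(\epsilon^2-\eta_j^2+\|z_j-x_{0,j}\|^2\right)^{1/2}\le\epsilon$, i.e.\ $\prod_{i\in I}\ball{x_{0,i}}{\eta_i}\subseteq\ball{\commv{x}_0}{\epsilon}$. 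Without this observation, neither your decoupled problems $\max_{x_i\in\ball{x_{0,i}}{\eta_i}}\bigl(\lambda_i u_i(x_i)-\langle p_\epsilon,x_i\rangle\bigr)$ nor your use of $({\bf A}_6)$ in the proof that $p_\epsilon\ne\nullv^*$ (where the non-satiation point, taken with $S=\ball{x_{0,j}}{\eta_j}$, must yield a perturbed bundle that is still admissible in $(\ref{in:PPvecopt2})$) is licensed; note this is the only place where the boundary location of $\commv{x}_\epsilon$ and the definition of the radii $\eta_i$ are actually used, so no proof of the second assertion can bypass it. Two smaller points: individual optimality must be claimed only over the localized consumption sets $\ball{x_{0,i}}{\eta_i}$ of $\mathcal{E}_{\commv{x}_0,\eta_1,\dots,\eta_n}$, not over ``the points of $\Omega_i$ in the budget set'' as you write (over all of $\Omega_i$ it is generally false); and the positivity $\lambda_j>0$, which you only flag as a requirement, does follow by the short argument you hint at --- if $\lambda_j=0$, the $j$-th decoupled maximality forces $\langle p_\epsilon,z_j-\aie_j\rangle\ge 0$ for every $z_j\in\ball{x_{0,j}}{\eta_j}$, contradicting the cheaper point supplied by $({\bf A}_5)$ --- after which one divides by $\lambda_j$ and rescales the price, as in the paper.
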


\begin{proof}
Since $\commv{x}_0$ is regular, it belongs to $\prod_{i\in I}\inte\Omega_i$
and $\FDer(u,c)(\commv{x}_0)$ is onto. Thus, under the assumptions 
$({\bf A}_1)-({\bf A}_4)$, $\R^n$ and $\X$ being reflexive spaces, it is possible
to apply Theorem \ref{thm:locvecopt}, with $h$, $g$, $\Omega$ and
$C$ replaced by $u$, $c$,  $\prod_{i\in I}\inte\Omega_i$ and $\Theta$,
respectively. As a consequence, one gets the existence of $\epsilon_0
>0$ such that, for every $\epsilon\in (0,\epsilon_0]$, there exists
$\commv{x}_\epsilon\in\bd\ball{\commv{x}_0}{\epsilon}\cap\mathcal{A}$
solving the localized problem
$$
   \Vmax{\R^n_+}_{\commv{x}\in\ball{\commv{x}_0}{\epsilon}} u(\commv{x})
   \quad\hbox{ subject to }\quad\commv{x}\in\mathcal{A}. 
$$
The reader should notice that by $({\bf A}_2)$, without loss of
generality, up to a reduction of $\epsilon_0$, one can
assume that $\ball{\commv{x}_0}{\epsilon_0}\subseteq\prod_{i\in I}
\Omega_i$, and hence $\ball{x_{0,i}}{\epsilon_0}\subseteq\Omega_i$,
for every $i\in I$. This prove the first assertion in the theorem.
As to the second one, fix $\epsilon\in (0,\epsilon_0]$ and
note that, along with the existence of
$\commv{x}_\epsilon$, Theorem  \ref{thm:locvecopt}
enables one to get the existence of $(\mu_\epsilon,x^*_\epsilon)\in
(\R^n\times\X^*)\backslash\{(\nullv,\nullv^*)\}$, such that
\begin{eqnarray}      \label{in:sep1}
     \mu_\epsilon\in\R^n_+\backslash\{\nullv\},\qquad
     -x^*_\epsilon\in\NCone{\sum_{i\in I}x_{\epsilon,i}-\aie}
     {\Theta},
\end{eqnarray}
and
\begin{eqnarray}       \label{in:sep2}
     \sum_{i\in I}\mu_{\epsilon,i}u_i(x_{\epsilon,i})+
   \langle x^*_\epsilon,\sum_{i\in I}x_{\epsilon,i}-\aie\rangle
\ge
    \sum_{i\in I}\mu_{\epsilon,i}u_i(x_i)+\langle x^*_\epsilon,
    \sum_{i\in I}x_i-\aie\rangle,\quad\forall\commv{x}\in
     \ball{\commv{x}_0}{\epsilon}.
\end{eqnarray}
Set $p_\epsilon=-x^*_\epsilon$, let us check that a multiple of $p_\epsilon$
supports the allocation $\commv{x}_\epsilon$, with reference to
the localized economy $\mathcal{E}_{\commv{x_0},\eta_1,\dots,\eta_n}$.
According to the position of $\eta_i$, it is $\commv{x}_\epsilon\in
\prod_{i=1}^n\ball{\commv{x}_0}{\eta_i}$, so $\commv{x}_\epsilon$
is a feasible allocation for $\mathcal{E}_{\commv{x_0},\eta_1,
\dots,\eta_n}$. The positivity
of the price $p_\epsilon$ is expressed by the second inclusion in
$(\ref{in:sep1})$. Since in $({\bf A}_4)$ $\Theta$ has been assumed
to be a cone, as noted in Remark \ref{rem:compcon}(i), one has also
\begin{eqnarray}      \label{eq:pepscmc}
     \langle p_\epsilon,\sum_{i\in I}x_{\epsilon,i}-\aie\rangle=0,
\end{eqnarray} 
which is exactly the market clear condition. To show that actually
$p_\epsilon\ne\nullv^*$, suppose to the contrary that $p_\epsilon$
vanishes. From inequality $(\ref{in:sep2})$, it follows
\begin{eqnarray}     \label{in:abspeps0}
    \sum_{i\in I}\mu_{\epsilon,i}u_i(x_{\epsilon,i})\ge
    \sum_{i\in I}\mu_{\epsilon,i}u_i(x_i),\quad\forall
    \commv{x}\in\ball{\commv{x}_0}{\epsilon}.
\end{eqnarray}
Since it is $\mu_\epsilon\ne\nullv$, a proper $j\in I$ can be found
such that $\mu_{\epsilon,j}>0$. By virtue of $({\bf A}_6)$,
taking $S=\ball{x_{0,j}}{\eta_j}$,
there exists $z_j\in\ball{x_{\epsilon,j}}{\epsilon}\cap\ball{x_{0,j}}
{\eta_j}$, with the property
$$
    u_j(x_{\epsilon,j})<u_j(z_j).
$$
Thus, if taking the boundle $\tilde\commv{x}$ defined by
\begin{eqnarray*}
     \tilde x_i=\left\{\begin{array}{ll}
         x_{\epsilon,i}, & \quad\forall i\in I\backslash\{j\}, \\
         z_j, & \quad\hbox{if } i=j,        
      \end{array}\right.
\end{eqnarray*}
it is $\tilde\commv{x}\in\ball{\commv{x}_0}{\epsilon}$. Indeed,
it results in
\begin{eqnarray*}
  \|\tilde\commv{x}-\commv{x}_0\| &=& \left( \sum_{i\in I\backslash\{j\}}
  \|x_{\epsilon,i}-x_{0,i}\|^2+\|z_j-x_{0,j}\|^2 \right)^{1/2}=
  \left(\epsilon^2- \|x_{\epsilon,j}-x_{0,j}\|^2+ \|z_j-x_{0,j}\|^2
  \right)^{1/2}  \\
   &=& \left(\epsilon^2-\eta_j^2+ \|z_j-x_{0,j}\|^2\right)^{1/2}\le\epsilon.
\end{eqnarray*}
Thus, being
$$
    \sum_{i\in I}\mu_{\epsilon,i}u_i(x_{\epsilon,i})<
    \sum_{i\in I}\mu_{\epsilon,i}u_i(\tilde x_i),
$$
one obtains a contradiction of $(\ref{in:abspeps0})$. Therefore,
$p_\epsilon\ne\nullv^*$. In order to complete the proof, it
remains to check the individual
optimality condition. To this aim, let us consider a distribution
$(\aie_{\epsilon,i})_{i\in I}$ of the aggregate endowment $\aie$
among the consumers, which is induced by $p_\epsilon$, in such a
way that
\begin{eqnarray}      \label{eq:enddis}
    \langle p_\epsilon,\aie_{\epsilon,i}\rangle=
    \langle p_\epsilon,x_{\epsilon,i}\rangle,
   \quad\forall i\in I.
\end{eqnarray}
This can be done, because it is
$$
    \left\langle p_\epsilon,\sum_{i\in I}\aie_{\epsilon,i}
    \right\rangle=\sum_{i\in I} \langle p_\epsilon,
    x_{\epsilon,i}\rangle=\langle p_\epsilon,
   \aie\rangle,
$$
according to $(\ref{eq:pepscmc})$.
Now, fix an arbitrary $j\in I$. For every $z_j\in\ball{x_{0,j}}
{\eta_j}$, any boundle $\commv{x}$ of the form
\begin{eqnarray*}
      x_i=\left\{\begin{array}{ll}
         x_{\epsilon,i} & \quad\forall i\in I\backslash\{j\}, \\
         z_j & \quad\hbox{if } i=j      
      \end{array}\right.
\end{eqnarray*}
still belongs to $\ball{\commv{x}_0}{\epsilon}$, as already observed.
Consequently,
from $(\ref{in:sep2})$, taking $(\ref{eq:pepscmc})$ into account,
one obtains
\begin{eqnarray*}    
    \mu_{\epsilon,j}u_j(x_{\epsilon,j}) &\ge&   
    \mu_{\epsilon,j}u_j(z_j)-\langle p_\epsilon,
    \sum_{i\in I\backslash\{j\}}x_{\epsilon,i}+z_j-\aie\rangle \\
    &=&  \mu_{\epsilon,j}u_j(z_j)-\langle p_\epsilon,
    z_j-x_{\epsilon,j}\rangle,
   \quad\forall  z_j\in\ball{x_{0,j}}{\eta_j}.
\end{eqnarray*}
From the last inequality, in force of the endowment distribution
$(\ref{eq:enddis})$, it follows
\begin{eqnarray}       \label{in:dectrdomin}
     \mu_{\epsilon,j}u_j(x_{\epsilon,j})\ge\mu_{\epsilon,j}u_j(z_j)
   -\langle p_\epsilon, z_j-\aie_{\epsilon,j}\rangle,
   \quad\forall  z_j\in\ball{x_{0,j}}{\epsilon}.
\end{eqnarray}
Notice that $\mu_{\epsilon,j}$ must be positive. Otherwise, it
would be
$$
    \langle p_\epsilon, z_j-\aie_{\epsilon,j}\rangle\ge 0,
$$
so the arbitrariness of $z_j$ would contradict the existence of
$\hat z_j\in\ball{x_{0,j}}{\epsilon}\subseteq\Omega_j$
such that
$$
   \langle p_\epsilon,\hat z_j\rangle<\langle p_\epsilon,
   \aie_{\epsilon,j}\rangle,
$$
which is guaranteed by virtue of the local qualification condition
(${\bf A}_5$). Thus, the positivity of $\mu_{\epsilon,j}$
enables one to obtain from $(\ref{in:dectrdomin})$
$$
    u_j(x_{\epsilon,j})\ge u_j(z_j),\quad\forall z_j\in
   \ball{x_{0,j}}{\eta_j}\hbox{ such that } 
   \left\langle {p_\epsilon\over\mu_{\epsilon,j}},z_j\right\rangle\le
   \left\langle {p_\epsilon\over\mu_{\epsilon,j}},\aie_{\epsilon,j}
   \right\rangle,
$$
what means that $x_{\epsilon,j}$ is optimal for the consumer $j\in I$
over her budget set.
Since the positivity condition and the market clear condition
are both invariant with respect to the multiplication by positive scalars,
one can take as a price system supporting $\commv{x}_\epsilon$
the functional $\bar p_\epsilon=p_\epsilon/\mu_{\epsilon,j}$.
This completes the proof.
\end{proof}

It should be clear that Theorem \ref{thm:welecon} is not a generalization
of the second welfare theorem to an exchange economy model affected
by non-convexities. What it states is rather different. First of all
it is an existence result. More precisely, it speaks about the
behaviour of an exchange economy near its regular feasible allocations,
provided that this economy is localized as in $(\ref{eq:localecon})$.
Of course, any such localization modifies the geometry of the problem,
with the result of yielding the existence of Pareto optimal allocations.
The second part of the thesis, which is valid under additional
assumptions, qualifies the above Pareto optimal allocations as
equilibria.

As in many recent generalizations of the second welfare theorem,
also in Theorem \ref{thm:welecon} some convexity assumption is
dropped out: in fact, utility functions are merely supposed to be $\CII$.
The convexity of the net demand constraint set is maintained
(in the cases $\Theta=\{\nullv\}$ and $\Theta=-\X_+$, such assumption is
automatically satisfied), because the price system supporting
an equlibrium is still obtained by means of the linear separation
theorem. This is evidently in contrast with many of the aforementioned
generalized second welfare theorems, which rely on a nonconvex
separation technique (see \cite{FlGoJo06,Jofr00}) due to
J.M. Borwein and A. Jofr\'e or on the so-called extremal principle
due to B.S. Mordukhovich (see \cite{MalMor01,Mord00,Mord06II}).



\begin{thebibliography}{99}

\bibitem{AlCoTo02} {\sc Aliprantis, C.D.}, {\sc Cornet, B.}, and
{\sc Tourky, R.}, \textit{Economic Equilibrium: Optimality and
Price Decentralization}, Positivity  \textbf{6} (2002), 205--241.

\bibitem{Arro51} {\sc Arrow, K.J.}, \textit{An extension of the
basic theorems of classical welfare economics}, in Proceedings
of the Second Berkeley Symposium on Mathematical Statistics
and Probability, 507--532. University of California Press, Berkeley,
1951.

\bibitem{Aubi93} {\sc Aubin, J.-P.}, \textit{Optima and Equilibria},
Springer-Verlag, berlin, 1998.

\bibitem{BonCor88} {\sc Bonisseau, J.M.} and {\sc Cornet, B.},
\textit{Valuation equilibrium and Pareto optimum in nonconvex
economies. General equilibrium theory and increasing returns},
J. Math. Econom. \textbf{17} (1988), no. 2-3, 293--308. 

\bibitem{Clar36} {\sc Clarkson, J.A.}, \textit{Uniformly convex
spaces}, Trans. Amer. Math. Soc. \textbf{40} (1936), no. 3, 396--414.

\bibitem{Day41} {\sc Day, M.M.}, \textit{Reflexive Banach spaces
not isomorphic to uniformly convex spaces}, Bull. Amer. Math.
Soc. \textbf{47} (1941), 313--317.

\bibitem{Debr54} {\sc Debreu, G.}, \textit{Valuation equilibrium
and Pareto optimum}, Proc. Nat. Acad. Sci. U. S. A. \textbf{40},
(1954).

\bibitem{Debr72} {\sc Debreu, G.}, \textit{Smooth preferences},
Econometrica \textbf{40} (1972), no. 4, 603--615.

\bibitem{FaHaMoPeZi01} {\sc Fabian, M.}, {\sc Habala, P.}, {\sc H\'ajek, P.},
{\sc Montesinos Santaluc\'ia, V.}, {\sc Pelant, J.}, and {\sc Zizler, V.},
\textit{Functional analysis and infinite-dimensional geometry},
Springer-Verlag, New York, 2001.

\bibitem{FlGoJo06} {\sc Florenzano, M.}, {\sc Gourdel, P.}, and
{\sc Jofr\'e, A.}, \textit{Supporting weakly Pareto optimal
allocations in infinite dimensional nonconvex economies},
Econom. Theory \textbf{29} (2006), no. 3, 549--564. 

\bibitem{Gues75} {\sc Guesnerie, R.}, \textit{Pareto optimality
in non-convex economies}, Econometrica \textbf{43} (1975),
no. 1, 1--29.

\bibitem{Jahn04} {\sc Jahn, J.}, \textit{Vector Optimization}, Springer-Verlag,
Berlin Heidelberg, 2004.

\bibitem{Jofr00} {\sc Jofr\'e, A.}, \textit{A second-welfare theorem
in nonconvex economies}, in Constructive, experimental, and nonlinear
analysis (Limoges, 1999), 175--184, CMS Conf. Proc., 27, Amer. Math.
Soc., Providence, RI, 2000.

\bibitem{Megg98} {\sc Megginson, R.E.}, \textit{An Introduction to
Banach Space Theory}, Springer-Verlag, New York, 1998.

\bibitem{MalMor01} {\sc Malcolm, G.G.} and {\sc Mordukhovich, B.S.},
\textit{Pareto optimality in nonconvex economies with
infinite-dimensional commodity spaces}, J. Global Optim.
\textbf{20} (2001), no. 3-4, 323--346. 

\bibitem{Mord00} {\sc Mordukhovich, B.S.}, \textit{An abstract extremal
principle with applications to welfare economics}, J. Math. Anal.
Appl. \textbf{251} (2000), no. 1, 187--216.

\bibitem{Mord06I} {\sc Mordukhovich, B.S.}, \textit{Variational Analysis
and Generalized Differentiation I: Basic Theory}, Springer-Verlag,
Berlin Heidelberg, 2006.

\bibitem{Mord06II} {\sc Mordukhovich, B.S.}, \textit{Variational Analysis
and Generalized Differentiation II: Applications}, Springer-Verlag,
Berlin Heidelberg, 2006.

\bibitem{Nord60} {\sc Nordlander, G.}, \textit{The modulus of
convexity in normed linear spaces}, Ark. Mat. \textbf{4} (1960),
15--17. 

\bibitem{Poly01a} {\sc Polyak, B.T.}, \textit{Convexity of Nonlinear
Image of a Small Ball with Applications to Optimization}, Set-Valued
Anal. \textbf{9} (2001), no. 1-2, 159--168.

\bibitem{Poly01b} {\sc Polyak, B.T.}, \textit{Local programming}, 
Zh. Vychisl. Mat. Mat. Fiz. \textbf{41} (2001), no. 9, 1324--1331
[in Russian], translation in Comput. Math. Math. Phys. \textbf{41}
(2001), no. 9, 1259--1266.

\bibitem{Poly03} {\sc Polyak, B.T.}, \textit{The convexity principle
and its applications}, Bull. Braz. Math. Soc. (N.S.) \textbf{34}
(2003), no. 1, 59--75.

\bibitem{Uder13} {\sc Uderzo, A.}, \textit{On the Polyak convexity
principle and its application to variational analysis}, preprint
(2013), arXiv:1303.7443 [math.OC], 1--13.

\end{thebibliography}
\end{document}